\newtheorem{thm}{Theorem}[section]
\newtheorem{prop}[thm]{Proposition}
\newtheorem{rem}[thm]{Remark}
\newtheorem{lem}[thm]{Lemma}
\newtheorem{cor}[thm]{Corollary}
\newtheorem{question}[thm]{Question}
\newtheorem*{prop*}{Proposition}
\newtheorem*{thm*}{Theorem}
\newtheorem*{rem*}{Remark}
\newtheorem*{imprem*}{Important Remark}
\newtheorem*{lem*}{Lemma}
\newtheorem*{dfn*}{Definition}
\newtheorem*{cor*}{Corollary}
\newtheorem*{probs*}{Problems}
\newtheorem*{prob*}{Problem}
\newtheorem*{problem*}{Problem}
\newtheorem*{ex*}{Example}
\newtheorem*{conj*}{Conjecture}
\newtheorem*{state*}{Statement}
\newtheorem*{question*}{Question}
\def\ep{\varepsilon}
\newcommand{\ie}[0]{\mathrm{i}}
\definecolor{vio}{RGB}{118, 120, 238}
\definecolor{identifiercolor}{rgb}{.4,.6,.56}
\definecolor{stringcolor}{gray}{0.5}
\definecolor{inactivecolor}{rgb}{0.15,0.15,0.5}
\title{A note on zero-density approaches for the difference between consecutive primes}
\author{Valeriia Starichkova}
\address{The University of New South Wales in Canberra}
\email{v.starichkova@unsw.edu.au}
\thanks{Supported by ARC Discovery Project DP240100186 and the Australian Mathematical Society Lift-off Fellowship of the author.}
\begin{document}

\maketitle

\section*{Abstract}

In this note, we generalise two results on prime numbers in short intervals. The first result is Ingham's theorem \cite{Ing37} which connects the zero-density estimates with short intervals where the prime number theorem holds,
and the second result is due to Heath-Brown and Iwaniec \cite{HBIw79}, which derives the weighted zero-density estimates used for obtaining the lower bound for the number of primes in short intervals. The generalised versions of these results make the connections between the zero-free regions, zero-density estimates, and the primes in short intervals more transparent. As an example, the generalisation of Ingham's theorem implies that, under the Density Hypothesis, the prime number theorem holds in $[x - \sqrt{x}\exp(\log^{2/3+\ep}x), x]$, which refines upon the classic interval $[x -x^{1/2+ \ep}, x]$.

%Ingham's theorem \cite{Ing37} which connects zero-density estimates for the Riemann zeta function with the difference between consecutive primes. Such a generalisation allows one to see explicitly how the combination of the zero-density estimates with the zero-free regions for the Riemann zeta function affect the length of the interval where the prime number theorem holds.

\section{Introduction}

\subsection{Prime number theorem in short intervals}

Let $\Lambda(n)$ denote the von Mangoldt function, and $\psi(x) = \sum_{n \leq x} \Lambda(x)$ denote the corresponding Chebyshev function. We say that the \emph{prime number theorem holds} in the intervals $(x - y, x]$, $y = y(x)$, if 
\begin{equation} \label{eq: pnt-prop}
    \psi(x) - \psi(x-y) \sim y \quad \text{as} \quad x \to \infty.
\end{equation}

One way to derive the length $y$ for which \eqref{eq: pnt-prop} holds is via the error term $E(x) = \psi(x) - x$ in the Prime Number Theorem (PNT). The size of $E(x)$ is tightly connected with the zero-free regions for the Riemann zeta function $\zeta$. In \cite{Ingham1932}, Ingham showed that if $\zeta(s)$, $s = \sigma + it$, does not have any zeroes in the region $\sigma > 1 - \eta(T)$ and $\eta(T)$ satisfies some extra properties, then
\begin{equation} \label{eq: PNT-error-Ingham}
    E(x) \ll_{\ep} x \exp\left( - \frac{1}{2}(1 - \ep) ~\omega_{(\eta)}(x) \right),
\end{equation}
where
\begin{equation*}
    \omega_{(\eta)}(x) = \inf_{T\geq 1} (\eta(T) \log x + \log T).
\end{equation*}

In \cite[Theorem 8]{Pintz1984}, Pintz improved $\frac{1}{2}(1 - \ep)$ to $(1 - \ep)$ on the right-hand side of \eqref{eq: PNT-error-Ingham}. Thus, currently the best asymptotic zero-free regions due to Korobov \cite{Korobov1958} and Vinogradov \cite{Vinogradov1958},
\begin{equation} \label{eq: Korobov-Vinogradov}
    \beta \geq 1 - \frac{c}{(\log t)^{2/3} (\log \log t)^{1/3}}, \quad t \gg 1,
\end{equation}
where $c > 0$ is a constant, imply
\begin{equation*}
    E(x) \ll_{\ep} x \exp\left( - d_{\ep} (\log x)^{3/5} (\log \log x)^{-1/5} \right),
\end{equation*}
where $d_{\ep} = \left(\frac{5^6 c^3}{2^2 \cdot 3^4}\right)^{1/5} - \ep$, see \cite[Section 6]{JohnstonYang2023}.

The bound \eqref{eq: PNT-error-Ingham} and its improvement by Pintz imply that the prime number theorem holds in $(x - y, x]$ as long as for some $\ep > 0$, $x \exp\left( - (1 - \ep) ~\omega_{(\eta)}(x) \right) = o\left( y \right)$. This establishes a connection between zero-free regions and intervals where the prime number theorem holds. 

Hoheisel \cite{Hoheisel37} showed the prime number theorem holds in shorter intervals by using instead the zero-density estimates for $\zeta$: he showed that \eqref{eq: pnt-prop} holds for $y = x^{\theta}$ for any $0 \leq \theta < 1 - \frac{1}{33000}$. The approach of Hoheisel, which we will refer to as the \emph{classic zero-density approach}, is well-described, simplified, and improved by Ingham \cite{Ing37}. Namely \cite[Theorem 1]{Ing37} states that
\begin{itemize}
    \item the zero-free regions for $\zeta(\sigma + i T)$ of the form $\sigma > 1 - \eta(T)$ with $\frac{\log \log T}{\log T} = o(\eta(T))$, and
    \item the zero-density estimates of the form $N(\sigma, T) \ll T^{b(1 - \sigma)} \log^B T$, which hold for constants $b, B > 0$ uniformly in $\frac{1}{2} \leq \sigma \leq 1$,
\end{itemize}
imply that the prime number theorem holds in $(x - y, x]$, $y = x^{\theta}$, if $1 - \frac{1}{b} < \theta < 1$. In particular, the recently obtained in \cite{GuthMaynard2024} value $b = \frac{30}{13} + \ep$ implies $\frac{17}{30} < \theta < 1$.

Thus, the length $y$ is mainly influenced by the zero-density estimates. In contrast, the only property required from the zero-free regions is to be ``good enough''. In this way, the connection between zero-free regions and $y$, explicit in \eqref{eq: PNT-error-Ingham}, disappears. However, zero-free regions do affect $y$ up to $x^{o(1)}$. This can be compared with the influence of zero-free regions on the error term $E(x) = x^{1 - o(1)}$ in the PNT. This is why the contribution of the zero-free regions is not visible if we focus on the case $y = x^{\theta}$ where $\theta$ is a constant. In Theorem \ref{thm: Ing37-1-gen}, we generalise Ingham's result to intervals of length $y = x^{\theta} g(x)$, $g(x) = x^{o(1)}$, to show explicitly the dependence of $y$ on the combination of zero-free regions and zero-density estimates. This allows us to reach slightly stronger conclusions about the length $y$. For example, we show that under the Density Hypothesis, \eqref{eq: pnt-prop} holds for $y = \sqrt{x} \exp(\log^{2/3 + \ep}x)$, which improves upon the classic length $y = x^{1/2 + \ep}$.

\subsection{The existence of primes in short intervals}
The relation \eqref{eq: pnt-prop} is a particular case of the following weighted relation
\begin{equation} \label{eq: PNT-weighted}
    \sum_{\substack{m, n \\ x-y < mnr \leq x}} a_m b_n \Lambda(r) \sim y \sum_{m, n} \frac{a_m b_n}{mn},
\end{equation}
where integers $m$ and $n$ are restricted to some (usually, dyadic) regions, and coefficients $a_m, b_n$ are real numbers between $0$ and $1$. The estimates of the form \eqref{eq: PNT-weighted} were used in the works \cite{HBIw79} and \cite{BakHar96} to bound the number of primes in short intervals from below. The obtained bounds were of the form
\begin{equation} \label{eq: primes-lower}
    \pi(x) - \pi(x-y) \gg c \frac{y}{\log x} \text{ as } x \to \infty,
\end{equation}
where $c > 0$ is a constant. In \cite{HBIw79}, Heath-Brown and Iwaniec showed \eqref{eq: primes-lower} for $y = x^{\theta}$, $\frac{13}{23} \leq \theta < 1$, and in \cite{BakHar96}, Baker and Harman pushed the value of $\theta$ down to $0.535$. Both results improve upon $\theta = \frac{17}{30} + \ep$ from the previous section obtained using the classic zero-density approach.

To obtain \eqref{eq: PNT-weighted}, the works \cite{HBIw79} and \cite{BakHar96} used the \emph{weighted zero-density estimates}, i.e.\ the estimates of the form \eqref{eq: new-dens-weighted-gen} and \eqref{eq: BH-zero-dens} below. We refer the reader to Section \ref{sec: appendix} to see how to derive \eqref{eq: PNT-weighted} from such estimates. The weighted estimates were derived from the \emph{classic} zero-density estimates of the form $N(\sigma, T) \ll T^{A(\sigma)(1-\sigma)} \log^B T$. However, the mentioned papers work with particular choices of $A(\sigma)$, so it is not straightforward to see which properties of $A(\sigma)$, in general, are crucial for improving upon the weighted zero-density estimates. In section \ref{sec: New-Zero-Density}, we will list these properties in a simple form. This simplification allows us to improve upon the weighted zero-density estimates in \cite{BakHar96}, see Corollary \ref{cor: new-dens-weighted}.

Namely, we focus on the weighted zero-density estimates \cite[Lemma 6]{HBIw79} and \cite[Lemma 1(i)]{BakHar96}, which provided major contributions in the corresponding papers. We generalise these two lemmas in Lemmas \ref{lem: new-dens-weighted-gen} and \ref{lem*: new-dens-weighted} respectively. We note that \cite[Lemma 6]{HBIw79} and \cite[Lemma 1(i)]{BakHar96} (and thus our generalisations) have very similar statements, but slightly different forms of the weighted zero-densities. We explain how to switch from one form to another in Proposition \ref{prop: zero_dens_transition}.

Finally, we weaken an initial condition from \cite[Lemma 1(i)]{BakHar96} to allow more flexibility. Namely, whereas \cite[Lemma 6]{HBIw79} has to rely on the explicit version of the zero-density estimates $N(\sigma, T) \ll T^{A(\sigma)(1-\sigma)} \log^B T$ (meaning that the $T^{o(1)}$ term is explicitly written as $\log^B T$), this is not the case any more in Lemma \cite[Lemma 1(i)]{BakHar96}: we prove its generalisation, Lemma \ref{lem*: new-dens-weighted}, assuming the non-explicit bound $N(\sigma, T) \ll T^{A(\sigma)(1-\sigma) + o(1)}$. This is useful for applications since many zero-density estimates in the literature are proven in the non-explicit form.

We note that although weighted zero-density estimates were not used in recent works on primes in short intervals (\cite{BHP}, \cite{MatMerTer2024}), this does not necessarily mean that the result from \cite{BHP} is out of reach for the weighted zero-density approach. However, the computational complexity and the lack of clarity in how to use the available zero-density estimates to maximise the regions for $(m,n)$ in \eqref{eq: PNT-weighted}, make this approach harder to use. The simpler connections one derives, the easier it will be to apply new improvements on zero-density results, and one of the aims of this paper is to make such connections simpler and more transparent.

\section{The classic zero-density approach}

\subsection{Ingham's theorem and its generalisation}

In \cite{Ing37}, Ingham provides two very valuable results: Theorem 3, which connects the bounds for $\zeta$ on the critical line with the zero-density estimates, and Theorem 1 connecting the zero-density estimates and zero-free regions with primes in short intervals. We will focus on the latter result which we cite below. 
We denote by $N(\sigma, T)$ the number of zeroes $\rho = \beta + \ie \gamma$ of the Riemann zeta function in the rectangle $\sigma \leq \beta \leq 1$, $0 < \gamma \leq T$.

\begin{thm}{\cite[Theorem 1]{Ing37}} \label{thm: Ing37-1}
Suppose $A > 0, B \geq 0$, and $\zeta(s)$, $s = \sigma + \ie t$, has no zeroes in the domain
\begin{equation*}
    \sigma > 1 - A \frac{\log \log t}{\log t} \quad \text{for } t > t_0 > 3,
\end{equation*}
and that
\begin{equation*}
    N(\sigma, T) = O\left( T^{b(1 - \sigma)} \log^B T \right),
\end{equation*}
uniformly for $\frac{1}{2} \leq \sigma \leq 1$ as $T \to \infty$. Then,
\begin{equation} \label{eq: Ingham-primes-asymp}
    \pi(x) - \pi(x - x^{\theta}) \sim \frac{x^{\theta}}{\log x} \quad \text{as } x \to \infty,
\end{equation}
for any $\theta$ satisfying
\begin{equation} \label{eq: Ing37-theta-bound}
    1 - \frac{1}{b + A^{-1}B} < \theta < 1.
\end{equation}
\end{thm}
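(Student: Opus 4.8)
The plan is to run the classical Hoheisel--Ingham argument through the truncated explicit formula, with the truncation height chosen just above $x^{1-\theta}$ so that no dyadic dissection in the ordinates of the zeros is needed. Set $y = x^{\theta}$ and $T = x^{1-\theta}(\log x)^{3}$. The truncated explicit formula for $\psi_{0}$ (which differs from $\psi$ by at most $\tfrac12\log x$), applied at $x$ and at $x-y$ and subtracted, gives
\[
    \psi(x) - \psi(x - y) = y - \sum_{|\gamma| \le T} \frac{x^{\rho} - (x-y)^{\rho}}{\rho} + O\!\left(\frac{x \log^{2} x}{T} + \log x\right),
\]
the constants from $\zeta'/\zeta(0)$ and from $\log(1-x^{-2})$ cancelling in the difference; by the choice of $T$ the error term is $O(x^{\theta}/\log x) = o(y)$. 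It therefore suffices to show that the sum over the nontrivial zeros $\rho = \beta + \ie\gamma$ is $o(y)$.

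For a single zero, $\bigl|\tfrac{x^{\rho} - (x-y)^{\rho}}{\rho}\bigr| = \bigl|\int_{x-y}^{x} u^{\rho-1}\,du\bigr| \le y\,x^{\beta - 1}$, so I am reduced to proving $\sum_{|\gamma| \le T} x^{\beta - 1} = o(1)$. Split the zeros into three ranges. The $O(1)$ zeros with $|\gamma| \le t_{0}$ contribute $O(y\,x^{\beta^{*}-1}) = o(y)$ for some fixed $\beta^{*} < 1$. The zeros with $\beta \le \tfrac12$ contribute $\ll x^{-1/2}\,\#\{\rho : |\gamma| \le T\} \ll x^{-1/2} T \log T = x^{1/2 - \theta}(\log x)^{O(1)}$, which is $o(1)$; here one uses that $\theta > \tfrac12$ is automatic, since any admissible density estimate forces $b \ge 2$ (compare it at $\sigma = \tfrac12$ with $N(\tfrac12, T) \gg T \log T$), whence $b + A^{-1}B \ge 2$ and $1 - \tfrac{1}{b+A^{-1}B} \ge \tfrac12$. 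There remains the main range $\tfrac12 < \beta \le 1 - \delta_{0}$ with $\delta_{0} := A\,\log\log T/\log T$; the hypothesised zero-free region makes this cut-off valid for all large $x$ (for $T$ large, $\log\log t/\log t$ is minimised over $t_{0} < t \le T$ at $t = T$).

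On the main range, partial summation against $N(\sigma, T)$ together with the hypothesis $N(\sigma, T) \ll T^{b(1-\sigma)}(\log T)^{B}$ yields
\[
    \sum_{1/2 < \beta \le 1 - \delta_{0}} x^{\beta} \;\ll\; x^{1/2} N(\tfrac12, T) \;+\; (\log T)^{B+1}\int_{\delta_{0}}^{1/2} x^{1-\delta}\,T^{b\delta}\,d\delta ,
\]
where I have used $\log x \asymp \log T$. Since the hypothesis on $\theta$ gives $1-\theta < \tfrac{1}{b+A^{-1}B} \le \tfrac1b$, we have $b(1-\theta) < 1$, so the first term is $o(x)$, and $T^{b}/x = x^{-\kappa + o(1)}$ with $\kappa := 1 - b(1-\theta) > 0$; hence $\int_{\delta_{0}}^{1/2} x^{1-\delta}T^{b\delta}\,d\delta = x\int_{\delta_{0}}^{1/2}(T^{b}/x)^{\delta}\,d\delta \ll x\,(T^{b}/x)^{\delta_{0}}/(\kappa \log x)$, the $\log x$ in the denominator absorbing one factor of $\log T$. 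Inserting $\log T = (1-\theta)\log x + O(\log\log x)$ and $\delta_{0} = \tfrac{A}{1-\theta}\cdot\tfrac{\log\log x}{\log x}(1 + o(1))$, the bound becomes $\tfrac{1}{\kappa}\,x\,(\log x)^{E + o(1)}$, where
\[
    E \;=\; B + \frac{A\bigl(b(1-\theta) - 1\bigr)}{1-\theta} \;=\; \frac{(1-\theta)(Ab + B) - A}{1-\theta},
\]
and $E < 0$ is precisely equivalent to $\theta > 1 - \tfrac{1}{b + A^{-1}B}$. Thus $\sum_{|\gamma| \le T} x^{\beta} \ll x(\log x)^{-c_{0}}$ for some $c_{0} = c_{0}(\theta) > 0$, so $\sum_{|\gamma| \le T} x^{\beta - 1} = o(1)$ and $\psi(x) - \psi(x - x^{\theta}) \sim x^{\theta}$.

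Finally I transfer this to $\pi$ by partial summation: writing $\pi(x) - \pi(x-y) = \int_{x-y}^{x}(\log t)^{-1}\,d\vartheta(t)$, using $\log t = (\log x)(1 + o(1))$ uniformly on $(x-y, x]$ (as $\theta < 1$) and $\psi - \vartheta \ll \sqrt{x}\log x = o(x^{\theta})$, one obtains $\pi(x) - \pi(x - x^{\theta}) \sim x^{\theta}/\log x$, which is \eqref{eq: Ingham-primes-asymp}. The part I expect to require the most care is the bookkeeping in the third paragraph: one must follow the interaction between the $(\log T)^{B}$ factor in the density estimate and the $\log\log t/\log t$-width of the zero-free region precisely enough to recover the exact exponent $b + A^{-1}B$ in \eqref{eq: Ing37-theta-bound}, keeping track of the lower-order terms hidden in $\log T$ and $\delta_{0}$, of the balancing choice of $T$, and of the finitely many low-lying zeros.
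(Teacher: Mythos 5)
Your proof is correct and follows essentially the same route as the paper's proof of its generalisation (Theorem \ref{thm: Ing37-1-gen}), which in turn declares itself a modification of Ingham's own argument: truncated explicit formula with $T$ just above $x^{1-\theta}$, reduction to $\sum_{|\gamma|\le T} x^{\beta-1}=o(1)$, a split at $\beta=1/2$ handled by the Riemann--von Mangoldt bound, a cutoff at $\beta = 1 - A\log\log T/\log T$ from the zero-free region, and partial summation against $N(\sigma,T)$ on the remaining strip. Your exponent bookkeeping is sound: with $T = x^{1-\theta}(\log x)^3$ one has $(T^{b}/x)^{\delta_0} = (\log x)^{-A\kappa/(1-\theta)\,(1+o(1))}$, and combined with the $(\log x)^{B}$ from the density hypothesis this gives $x(\log x)^{E+o(1)}$ with $E = \bigl((1-\theta)(Ab+B)-A\bigr)/(1-\theta)$, so $E<0$ is exactly $\theta > 1 - 1/(b + A^{-1}B)$. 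The only presentational difference from the paper is that the paper invokes Wolke's sharpened Perron error $O(x/T)$ and takes $T = (x/y)(\log x)^{\delta/2}$, whereas you absorb the classical $O(x\log^2 x/T)$ into the extra $(\log x)^3$ in $T$; this is immaterial. Your use of $N(1/2,T)\gg T\log T$ to force $b\ge 2$, and hence $\theta>1/2$ strictly, is also the argument the paper makes, and it is needed to dispose of the $\beta\le 1/2$ range and the $\psi\to\pi$ transfer.
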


Let us note that according to the modern results on zero-free regions, $A \to \infty$ as $t_0 \to \infty$, so \eqref{eq: Ing37-theta-bound} can be reduced to $1 - \frac{1}{b} < \theta < 1$. Hence, the smaller the value of $b$ is, the better the bound for $\theta$ we get. Ingham got $b = \frac{925}{348} + \ep \approx 2.6581 + \ep$, and thus $\theta = \frac{577}{925} + \varepsilon$, using the upper bounds for $\zeta(s)$ on the critical line $\Re(s) = \frac{1}{2}$. Here, $\ep$ conventionally denotes a small positive number and does not need to be the same at each appearance. Meanwhile, a lower value of $b$ can be obtained with different techniques: in 1972, Huxley \cite{Hux72} got $b = \frac{12}{5} = 2.4$, and very recently Guth and Maynard \cite{GuthMaynard2024} improved Huxley's result to $b = \frac{30}{13} + \ep \approx 2.308 + \ep$. The last result implies \eqref{eq: Ingham-primes-asymp} for all $\theta > \frac{17}{30}$.

Under the Density Hypothesis (DH), i.e.\ if $b = 2 + \varepsilon$, Theorem \ref{thm: Ing37-1} implies $\theta = \frac{1}{2} + \varepsilon$, which is slightly worse than what the Riemann Hypothesis (RH) implies.
\begin{thm}[{\cite[Theorem I]{Cramer1920}}]
Assume RH holds, then there exists a constant $c > 0$ such that
\begin{equation} \label{eq: Cramer}
    \pi(x + c \sqrt{x} \log x) - \pi(x) > \sqrt{x} 
\end{equation}
for all $x \geq 2$.
\end{thm}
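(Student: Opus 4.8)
The plan is to deduce Cram\'er's estimate from the classical \emph{exact} explicit formula for the integrated Chebyshev function $\psi_1(x):=\int_0^x\psi(t)\,dt$, namely
\[
  \psi_1(x)=\frac{x^2}{2}-\sum_{\rho}\frac{x^{\rho+1}}{\rho(\rho+1)}+c_1x+c_2+O(1/x),
\]
where $\rho=\beta+\ie\gamma$ runs over the non-trivial zeros of $\zeta$ and $c_1,c_2$ are absolute constants. Working with $\psi_1$ rather than $\psi$ is what lets RH bite: the series over $\rho$ now converges absolutely, and under RH it is $\le x^{3/2}\sum_\rho|\rho|^{-2}=O(x^{3/2})$. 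The bridge from $\psi_1$ to primes is the second difference together with the convexity of $\psi_1$ (equivalently, the monotonicity of $\psi$): for $0<h\le x/2$,
\[
  \psi_1(x+h)-2\psi_1(x)+\psi_1(x-h)\le h\bigl(\psi(x+h)-\psi(x-h)\bigr),
\]
so a good lower bound on the left forces prime powers into $(x-h,x+h]$.

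I would first evaluate the left-hand side using the exact formula. The term $x^2/2$ contributes exactly $h^2$, the affine part $c_1x+c_2$ contributes $0$, and the $O(1/x)$ tail is negligible. For the zero-sum I would use the identity
\[
  \frac{(x+h)^{\rho+1}-2x^{\rho+1}+(x-h)^{\rho+1}}{\rho(\rho+1)}=\int_{-h}^{h}(h-|u|)(x+u)^{\rho-1}\,du,
\]
so that, under RH, each term is bounded both by $\ll h^2x^{-1/2}$ (estimating the integral, since $|(x+u)^{\rho-1}|\asymp x^{-1/2}$) and, trivially, by $\ll x^{3/2}|\gamma|^{-2}$. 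Splitting $\sum_\rho$ at $|\gamma|\asymp x/h$ and invoking the Riemann--von Mangoldt bounds $\#\{\rho:|\gamma|\le T\}\ll T\log T$ and $\sum_{|\gamma|>U}|\gamma|^{-2}\ll U^{-1}\log U$, both halves come out $\ll h\sqrt{x}\,\log(x/h)$. Therefore
\[
  \psi_1(x+h)-2\psi_1(x)+\psi_1(x-h)=h^2+O\!\left(h\sqrt{x}\,\log(x/h)\right),
\]
and dividing by $h$ gives $\psi(x+h)-\psi(x-h)\ge h-C\sqrt{x}\log x$ with an absolute $C$, since $\log(x/h)\le\log x$.

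Now fix $h=c\sqrt{x}\log x$ with $c$ a large enough absolute constant, so that $\psi(x+h)-\psi(x-h)\gg\sqrt{x}\log x$ for all large $x$. Since $0\le\psi(t)-\theta(t)\ll\sqrt{t}$, the same lower bound holds for $\theta(x+h)-\theta(x-h)$, and then
\[
  \pi(x+h)-\pi(x-h)\ge\frac{\theta(x+h)-\theta(x-h)}{\log(x+h)}\gg\sqrt{x},
\]
because $\log(x+h)=\log x+O(1)$; enlarging $c$ makes the implied constant exceed $1$, which already proves the symmetric version for large $x$. The one-sided statement follows by applying this at $y=x+c\sqrt{x}\log x$: for large $x$ the symmetric interval about $y$ of radius $c\sqrt{y}\log y$ lies inside $(x,\,x+3c\sqrt{x}\log x]$ while $\sqrt{y}>\sqrt{x}$, so $\pi(x+3c\sqrt{x}\log x)-\pi(x)>\sqrt{x}$; renaming the constant and enlarging it once more to cover the finitely many small $x\ge2$ completes the proof.

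The crux — and the main obstacle — is the zero-sum estimate and the idea behind it: the pointwise RH bound $\psi(x)-x\ll\sqrt{x}\log^2x$ is one logarithm too weak to produce an interval of length only $\sqrt{x}\log x$, and the missing logarithm is recovered solely through the second-difference structure, where the natural truncation height $|\gamma|\asymp x/h\to\infty$ cuts across many zeros — in contrast to a single difference, where the analogous height $\asymp 1/h\to0$ lies below every zero and no saving occurs. Arranging the split and the partial-summation bounds to deliver exactly $h\sqrt{x}\log(x/h)$, hence only one power of $\log x$ after division by $h$, is the delicate point; the passage from $\psi$ to $\pi$ and the bookkeeping for small $x$ are routine.
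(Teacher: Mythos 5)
The paper itself gives no proof of this theorem --- it is cited from Cram\'er's 1920 paper as a benchmark to compare against the unconditional Corollary~\ref{cor: under-DH} --- so there is nothing in the text to compare your argument with. Your proof is in fact essentially Cram\'er's own: pass to the integrated Chebyshev function $\psi_1$, where the explicit formula has an absolutely convergent zero-sum; take the second difference $\psi_1(x+h)-2\psi_1(x)+\psi_1(x-h)$, whose Taylor-remainder identity $\int_{-h}^{h}(h-|u|)(x+u)^{\rho-1}\,du$ lets RH turn each summand into $\min\bigl(h^2x^{-1/2},\,x^{3/2}|\gamma|^{-2}\bigr)$; split at $|\gamma|\asymp x/h$ with the Riemann--von Mangoldt count to land on $h^2+O(h\sqrt{x}\log(x/h))$; and transfer to $\psi$ via monotonicity. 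The estimates all check out, and the remark that the second difference is what recovers the missing logarithm (vis-\`a-vis the pointwise bound $\psi(x)-x\ll\sqrt{x}\log^2x$) is exactly the right way to think about it.

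One local slip in the final paragraph: with $y=x+c\sqrt{x}\log x$, the symmetric interval $\bigl(y-c\sqrt{y}\log y,\,y+c\sqrt{y}\log y\bigr]$ does \emph{not} lie inside $(x,\,x+3c\sqrt{x}\log x]$, because $\sqrt{y}\log y>\sqrt{x}\log x$, so the left endpoint dips to about $x-\tfrac{c^2}{2}\log^2 x<x$. This costs you only $O(\log^2 x)=o(\sqrt{x})$ possible primes and so can be absorbed by enlarging $c$; cleaner, though, is to centre the symmetric interval at $y=x+2c\sqrt{x}\log x$, which puts it strictly inside $(x,\,x+4c\sqrt{x}\log x]$ once $x$ is large, and then rename the constant. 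With that repair the argument is complete, including the bookkeeping for small $x\ge 2$.
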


Our generalisation of Theorem \ref{thm: Ing37-1} below distinguishes between RH and DH.

\begin{thm} \label{thm: Ing37-1-gen}
Let $\frac{1}{2} \leq \theta < 1$ and $y = x^{\theta} g(x) \leq \frac{x}{2}$, where $\log^{\delta} x \ll g(x)$ for some $\delta > 0$, and $g(x) \ll_\ep x^{\varepsilon}$ for all $\varepsilon > 0$. Then
\begin{equation*}
    \pi(x) - \pi(x-y) \sim \frac{y}{\log x},
\end{equation*}
if the conditions below hold.
\begin{enumerate}
    \item $\zeta(s)$, $s = \sigma + \ie t$, has no zeroes in the region
    \begin{equation*}
        \sigma > 1 - \eta(T) \quad \text{for } T \geq T_0 \geq 3,
    \end{equation*}
    where $\eta(T) < \frac{1}{2}$ is a non-increasing and non-negative function.
    \item There exist a non-decreasing function $h(T)$ and a constant $0 < b \leq \frac{1}{1-\theta}$ such that
    \begin{equation} \label{eq: Ing37-1-gen-zero-density}
        N(\sigma, T) \ll T^{b(1-\sigma)} h(T),
    \end{equation}
    uniformly for $\frac{1}{2} \leq \sigma \leq 1$ as $T \to \infty$.
    \item The functions $g, \eta,$ and $h$ satisfy the following relation as $x$ tends to $\infty$.
    \begin{itemize}
    \item If $\theta > 1 - \frac{1}{b}$, then
    \begin{equation} \label{eq: Ing-gen-asmpt-3-1}
        h(x) = o \left( x^{(1 + b(\theta-1)) \eta(x)} \right).
    \end{equation}
    %In particular, $\log h(x) = o \left( \varepsilon b \left( \frac{\log x}{\log \log x} \right)^{1/3} \right)$ for all $\varepsilon > 0$ is enough.
    \item If $\theta = 1 - \frac{1}{b}$, then
    \begin{equation} \label{eq: Ing-gen-asmpt-3-2}
        \left( \frac{\log^{\delta/2}x}{g(x)} \right)^{b \eta(x)} \frac{h(x) \log x}{\log \log x} = o(1).
    \end{equation}
    \item In addition, if $b = 2$ and $\theta = \frac{1}{2}$, then
    \begin{equation} \label{eq: Ing-gen-asmpt-3-3}
        \log^{1 + \delta} x \ll g(x).
    \end{equation}
    \end{itemize}
\end{enumerate}
\end{thm}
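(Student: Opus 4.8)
The plan is to follow the classic zero-density route: replace $\pi(x) - \pi(x-y)$ by $\psi(x) - \psi(x-y)$ (the difference from prime powers is $O(\sqrt x \log x) = o(y/\log x)$ since $y \gg x^{1/2}\log^\delta x$), and then use an explicit formula. First I would invoke the truncated explicit formula $\psi(u) = u - \sum_{|\gamma| \le T} \frac{u^\rho}{\rho} + O\!\left(\frac{u \log^2 u}{T} + \log u\right)$ at $u = x$ and $u = x-y$ and subtract, so that the main term $y$ appears and the error term is $O(x\log^2 x / T)$ provided we choose, say, $T = x/y \cdot \log^3 x$ (or more precisely any $T$ with $x \log^2 x / T = o(y/\log x)$; we will in fact need a larger $T$, see below). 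The task reduces to showing that
\begin{equation*}
    S(T) := \sum_{|\gamma| \le T} \frac{x^\rho - (x-y)^\rho}{\rho} = o\!\left( \frac{y}{\log x} \right).
\end{equation*}
Using $x^\rho - (x-y)^\rho = \rho \int_{x-y}^{x} u^{\rho-1}\, du$, one bounds $\left| \frac{x^\rho-(x-y)^\rho}{\rho} \right| \le y\, x^{\beta-1}$, and also (trivially) $\le \frac{2 x^\beta}{|\gamma|}$; interpolating, the contribution of a zero $\rho = \beta + i\gamma$ is $\ll \min\!\left( y\, x^{\beta-1},\ x^\beta/|\gamma| \right)$.

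The core of the argument is to estimate $\sum_{|\gamma|\le T} \min(y x^{\beta-1}, x^\beta/|\gamma|)$ by splitting the critical strip into horizontal/vertical pieces and summing against $N(\sigma,T)$ via a Stieltjes integral (or dyadic decomposition in both $\sigma$ and $|\gamma|$). The zero-free region (condition 1) confines all zeros to $\beta \le 1 - \eta(T)$; substituting $N(\sigma,T) \ll T^{b(1-\sigma)} h(T)$ (condition 2) and writing $x^{\beta-1} = x^{-(1-\beta)}$, a typical dyadic block near height $\sim T$ and abscissa $\sigma$ contributes roughly $y\, x^{-(1-\sigma)} \cdot T^{b(1-\sigma)} h(T) = y\, h(T) \left( T^b / x \right)^{1-\sigma}$. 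The inner sum over $\sigma \in [\tfrac12, 1-\eta(T)]$ is a geometric-type series in $(T^b/x)^{1-\sigma}$; its behaviour depends on the sign of $T^b/x - 1$, i.e.\ on whether $b\log T \lessgtr \log x$. Since we ultimately take $T$ a bit larger than $x^{1-\theta}$, the comparison $b(1-\theta) \lessgtr 1$ is exactly the trichotomy $\theta \gtrless 1 - \tfrac1b$ appearing in condition 3. In the case $\theta > 1 - \tfrac1b$ the sum is dominated by the endpoint $\sigma = 1 - \eta(T)$, giving a bound $\ll y\, h(T)\, (T^b/x)^{\eta(T)}$, and choosing $T \asymp x^{1-\theta}$ (up to $\log$ powers absorbed by $g$) turns this into $\ll y\, h(x)\, x^{-(1 + b(\theta-1))\eta(x)}$ times lower-order factors, which is $o(y/\log x)$ precisely by \eqref{eq: Ing-gen-asmpt-3-1}. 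In the boundary case $\theta = 1 - \tfrac1b$ the geometric series degenerates to a sum of $\asymp \eta(T)^{-1} \asymp$ (a power of $\log x$) nearly-equal terms, which is where the factor $\tfrac{\log x}{\log\log x}$ and the precise shape $\left(\log^{\delta/2} x / g(x)\right)^{b\eta(x)}$ in \eqref{eq: Ing-gen-asmpt-3-2} come from; here one must choose $T$ slightly below $x^{1-\theta}$, roughly $T \asymp x^{1-\theta}/g(x) \cdot \log^{\delta/2} x$, to make $(T^b/x)^{\eta} < 1$ and beat the $\eta^{-1}$ loss. The sub-case $b = 2$, $\theta = \tfrac12$ additionally forces the explicit-formula error $x\log^2 x / T$ and the $|\gamma|$-small zeros to be controlled, and comparing $T \asymp \sqrt x \log^{\text{something}} x$ against $x\log^2 x/T = o(y/\log x) = o(\sqrt x\, g(x)/\log x)$ is what requires the slightly stronger $g(x) \gg \log^{1+\delta} x$ in \eqref{eq: Ing-gen-asmpt-3-3}.

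The small-$\gamma$ zeros (say $|\gamma| \le T_0$) are finitely many and contribute $O(\sqrt x \cdot$ const$) = o(y/\log x)$; the zeros with $|\gamma|$ between a bounded height and $T$ are handled by the same dyadic estimate using the second ($x^\beta/|\gamma|$) bound, which is where monotonicity of $h$ and $\eta$ is used so that we may replace $h(|\gamma|), \eta(|\gamma|)$ by $h(T), \eta(x)$ etc.\ without loss. The hypotheses $g(x) \gg \log^\delta x$ and $g(x) \ll_\ep x^\ep$ are used respectively to swallow the prime-power and explicit-formula errors and to ensure $T = x^{1-\theta}/g(x) \cdot (\text{log factors})$ still tends to infinity like a positive power of $x$, so that all "$o(1)$ as $T \to \infty$" statements and the uniform zero-density bound genuinely apply.

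I expect the main obstacle to be the boundary case $\theta = 1 - \tfrac1b$: there the geometric series in the zero-sum no longer has a dominant term, so one cannot simply estimate by the endpoint, and the optimisation of $T$ must be done carefully to trade the $\eta(T)^{-1} \cdot \log$ loss against the factor $(T^b/x)^{\eta}$, leading to the somewhat delicate condition \eqref{eq: Ing-gen-asmpt-3-2}. Keeping track of all the logarithmic factors (from the explicit formula, from $N(\sigma,T) \ll \cdots \log$, and from the crude bound on the number of dyadic blocks, which is $\asymp \log x / \log\log x$ because $\eta(T) \gg \log\log T/\log T$ is where that ratio enters) so that they assemble into exactly the stated $o(1)$ conditions — rather than something weaker — will be the fiddly part; the rest is a routine, if careful, partial-summation-against-$N(\sigma,T)$ computation.
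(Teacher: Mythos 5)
Your strategy is in essence the paper's: pass to $\psi(x)-\psi(x-y)$, truncate the explicit formula at height $T$, bound each $(x^\rho-(x-y)^\rho)/\rho$ by $y x^{\beta-1}$, convert the zero sum into an integral against $N(\sigma,T)$, and split according to whether $T^b/x$ is a genuine power of $x$ ($\theta > 1-\tfrac1b$) or only a quotient of logarithms ($\theta = 1-\tfrac1b$). The trichotomy, the role of the zero-free region in cutting the integral at $\sigma = 1-\eta(T)$, and the use of monotonicity of $\eta$ and $h$ are all correctly identified. Two cosmetic differences: the paper never needs the $\min\bigl(yx^{\beta-1},x^\beta/|\gamma|\bigr)$ refinement (the crude $2yx^{\beta-1}$ bound suffices, so no dyadic decomposition in $|\gamma|$ is performed), and condition \eqref{eq: Ing-gen-asmpt-3-3} in the sub-case $b=2$, $\theta=\tfrac12$ controls the $x^{-1/2}T\log T$ contribution of the zeros with $\beta \le \tfrac12$ (via $N(0,T)\sim\tfrac{T}{2\pi}\log T$), not the Perron remainder as you suggest.

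The genuine gap is the self-contradiction in your choice of $T$, which you flag ("we will in fact need a larger $T$, see below") but never resolve. Using the textbook truncation error $O(x\log^2 x/T)$ you need $T\gg(x/y)\log^{2+\varepsilon}x$ just to make the remainder $o(y)$; yet for the boundary case $\theta=1-\tfrac1b$ you then announce $T\asymp(x/y)\log^{\delta/2}x$, which for small $\delta$ is far smaller. You cannot have both: with $T=(x/y)\log^{\delta/2}x$ the standard remainder is $\gg y\log^{2-\delta/2}x$, which is not $o(y)$ for $\delta<4$; whereas with $T\gg(x/y)\log^{2+\varepsilon}x$ the geometric factor becomes $\bigl(\log^{2+\varepsilon}x/g(x)\bigr)^{b\eta}$, which is not controlled by hypothesis \eqref{eq: Ing-gen-asmpt-3-2} when $\delta$ is small. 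The paper's resolution — and the ingredient your sketch lacks — is to replace the textbook Perron truncation by the Goldston/Wolke sharpening, yielding an $O_\ep(x/T)$ remainder for $4\le T\ll x^{1-\ep}$; this is what licenses the choice $T=(x/y)\log^{\delta/2}x$ and produces exactly the $\log^{\delta/2}x$ appearing in \eqref{eq: Ing-gen-asmpt-3-2}. For $\theta>1-\tfrac1b$ the power of $x$ in $(T^b/x)^{\eta(T)}$ absorbs any extra $\log$ factors, so there only the boundary case is affected — but that boundary case is precisely the novel content (the DH corollary), so the gap is material.
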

\begin{proof}
We will follow the proof of \cite[Theorem 1]{Ing37} and modify some of its parts. Let us consider $\psi(x) = \sum_{n \leq x} \Lambda(n)$, then Perron's formula \cite[Lemma 3.12]{Tit86} would give us
\begin{equation*}
    \psi(x) - \psi(x-y) = y - \sum_{|\gamma| \leq T} \frac{x^{\rho} - (x-y)^{\rho}}{\rho} + O\left( \frac{x}{T} \log^2 x \right),
\end{equation*}
for some $1 \leq T \leq x$. In fact, the error term was refined to $O\left(\frac{x}{T} \log x \log \log x\right)$ by Goldston in \cite{Goldston1983}, and improved on average by Wolke in \cite{Wolke1983}. If we use Wolke's bound for $4 \leq T \ll x^{1-\ep}$ with $0 < \ep < 1$, we will get
\begin{equation} \label{eq: truncated_Perron_short_sharper}
    \psi(x) - \psi(x-y) = y - \sum_{|\gamma| \leq T} \frac{x^{\rho} - (x-y)^{\rho}}{\rho} + O_{\ep}\left( \frac{x}{T} \right).\\
\end{equation}

We will show that the right-hand side of \eqref{eq: truncated_Perron_short_sharper} is $y + o(y)$ to prove the theorem. We choose $T = \frac{x}{y} (\log{x})^{\delta/2}$ with $\delta$ from the definition of $g(x)$. This choice bounds the third term on the right-hand side of \eqref{eq: truncated_Perron_short_sharper} by $O\left( y\log^{-\delta/2} x \right)$. To get a bound for the second term we use
\begin{align}
    \left| \frac{x^{\rho} - (x - y)^{\rho}}{\rho} \right| &= \left| \int_{x - y}^{x} u^{\rho - 1} du \right| \leq \int_{x - y}^{x} u^{\beta - 1} du \nonumber\\
    &\leq y (x-y)^{\beta-1} \leq y x^{\beta - 1} \left( 1 - \frac{y}{x} \right)^{\beta - 1} \leq 2 y x^{\beta - 1}, \label{eq: mean-value-x-rho}
\end{align}
for all $\beta > 0$.
It remains to prove that
$$\sum_{\substack{\rho \\ |\gamma| \leq T}} x^{\beta-1} = o(1).$$
For a zero $\rho$ of $\zeta$ we write $\rho = \beta + \ie \gamma$, then
\begin{equation*}
    \sum_{\substack{\rho \\ |\gamma| \leq T}} (x^{\beta-1} - x^{-1}) = \sum_{\substack{\rho \\ |\gamma| \leq T}} \int_{0}^{\beta} x^{\sigma - 1} \log x ~d\sigma = \int_{0}^{1} \sum_{\substack{\rho \\ |\gamma|\leq T\\ \beta \geq \sigma}} x^{\sigma - 1} \log x ~d \sigma,
\end{equation*}
whence
\begin{align}
    \sum_{\substack{\rho \\ |\gamma| \leq T}} x^{\beta-1} &= 2 x^{-1} N(0, T) + 2 \int_{0}^{1} N(\sigma, T) x^{\sigma - 1} \log x ~d\sigma \nonumber\\
    &= 2 x^{-1} N(0, T) + 2\int_{0}^{1-\eta(T)} N(\sigma, T) x^{\sigma - 1} \log x ~d\sigma. \label{eq: Ing37-dens-integral}
\end{align}
Since
\begin{equation} \label{eq: N-sigma-omega}
    N\left(\frac{1}{2}, T\right) = \Omega(T \log T),
\end{equation}
by \cite[Theorem 10.7]{Tit86}, we get $b \geq 2$ by substituting $\sigma = \frac{1}{2}$ into \eqref{eq: Ing37-1-gen-zero-density}. Moreover,  by \cite[Theorem 9.4]{Tit86}, $$N(0, T) \sim \frac{T}{2\pi} \log T,$$ hence we can replace \eqref{eq: Ing37-1-gen-zero-density} by the stronger bound $N(\sigma, T) \ll T \log T$ for all $0 \leq \sigma \leq \frac{1}{2}$. Thus, the first term in \eqref{eq: Ing37-dens-integral} is $O(x^{-1} T \log T)$, and we can split the range for the integral in \eqref{eq: Ing37-dens-integral} into two parts $0 \leq \sigma \leq \frac{1}{2}$ and $\frac{1}{2} \leq \sigma \leq 1 - \eta(T)$. Hence, \eqref{eq: Ing37-dens-integral} is asymptotically bounded from above by
\begin{align}
    x^{-1} T \log T + \int_{0}^{1-\eta(T)} &N(\sigma, T) x^{\sigma - 1} \log x ~d\sigma \ll x^{-1/2} T \log T + \int_{1/2}^{1-\eta(T)} N(\sigma, T) x^{\sigma - 1} \log x ~d\sigma \nonumber\\
    &\ll \frac{\log^{1 + \delta/2} x}{g(x)} x^{1/2 - \theta} + \left[ \left( \frac{x}{T^b} \right)^{-\eta(T)} - \left( \frac{x}{T^b} \right)^{-1/2} \right] \frac{h(T) \log x}{\log \frac{x}{T^b}}. \label{eq: N-sigma-T-integral}
\end{align}

Let us show that the first term from the second line in \eqref{eq: N-sigma-T-integral} is $o(1)$. If $\theta > \frac{1}{2}$, it follows from $g(x) \gg 1$. If $\theta = \frac{1}{2}$, it follows from assumption \eqref{eq: Ing-gen-asmpt-3-3}. By definition, $T^b < x^{b(1 - \theta)}$ for $x$ large enough, hence $T^b < x$ because $\theta \geq 1 - \frac{1}{b}$. Thus, $\frac{x}{T^b} > 1$ and the expression in the square brackets in \eqref{eq: N-sigma-T-integral} is asymptotically $\left( \frac{x}{T^b} \right)^{-\eta(T)}$ because $\eta(T) < \frac{1}{2}$ by definition. We finally have
\begin{equation*}
    \sum_{\substack{\rho \\ |\gamma| \leq T}} x^{\beta-1} \ll \left( \frac{\log^{\delta/2}x}{g(x)} ~x^{1-\theta - 1/b} \right)^{b \eta(T)} \cdot \frac{h(T) \log x}{b \log \left( \frac{g(x)}{\log^{\delta/2}x} ~x^{\theta + 1/b - 1} \right)} + o(1).
\end{equation*}
Let us assume that $\theta + \frac{1}{b} - 1 > 0$. Since $g(x) \gg \log^{\delta} x$, we have
\begin{equation*}
    \left( \frac{\log^{\delta/2}x}{g(x)} ~x^{1-\theta - 1/b} \right)^{b \eta(T)} \ll x^{(1 - \theta - 1/b) b\eta(T)}.
\end{equation*}
Thus, we get
\begin{equation*}
    \sum_{\substack{\rho \\ |\gamma| \leq T}} x^{\beta-1} \ll x^{(1 - \theta - 1/b) b\eta(T)} h(T) = o(1),
\end{equation*}
by assumption \eqref{eq: Ing-gen-asmpt-3-1} and the monotonicity of $\eta(x)$ and $h(x)$.

In case $\theta + \frac{1}{b} - 1 = 0$, we use $\log^{\delta/2} \ll g(x)$ and the monotonicity of $\eta(x)$ and $h(x)$ to get:
\begin{align*}
    \sum_{\substack{\rho \\ |\gamma| \leq T}} x^{\beta-1} \ll_\delta \left( \frac{\log^{\delta/2}x}{g(x)} \right)^{b \eta(T)} \frac{h(T) \log x}{\log \log x} \ll \left( \frac{\log^{\delta/2}x}{g(x)} \right)^{b \eta(x)} \frac{h(x) \log x}{\log \log x} = o(1),
\end{align*}
where the last assertion follows from assumption \eqref{eq: Ing-gen-asmpt-3-2}. This concludes the proof of the theorem.
\end{proof}

As an easy corollary, we can bound $g(x)$ by a finite logarithm power under a very strong assumption that the zero-free regions contain a vertical strip on the left of $\Re(s) = 1$.

\begin{cor} \label{cor: under-DH}
Let us assume the classical zero-density estimates hold
$$N(\sigma, T) \ll T^{b(1-\sigma)} \log^B T,$$
uniformly for $\frac{1}{2} \leq \sigma \leq 1$, where $b$ and $B$ are positive constants. Let us assume there exists $0 < \eta_0 < \frac{1}{2}$ such that $\zeta(s)$ has no zeroes for $\Re(s) > 1 - \eta_0$. Then
$$\pi(x) - \pi(x-y) \sim \frac{y}{\log x},$$
for $y = x^{1 - \frac{1}{b}} \log^C x$ for all $C$ satisfying
$$C > \max \left\{ 1, \frac{B+1}{b\eta_0} \right\}.$$

In particular under RH, for any $\ep > 0$, the prime number theorem holds in $(x-y, x]$ with $y = \sqrt{x}\log^{2 + \ep} x$.
\end{cor}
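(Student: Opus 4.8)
The plan is to derive Corollary~\ref{cor: under-DH} from Theorem~\ref{thm: Ing37-1-gen} by checking that the hypotheses of the theorem are satisfied with the specific choices $\eta(T) = \eta_0$ (a constant function, which is trivially non-increasing and non-negative), $h(T) = \log^B T$ (non-decreasing), and $g(x) = \log^C x$ with $C > \max\{1, (B+1)/(b\eta_0)\}$. First I would record that $g(x) = \log^C x$ satisfies the running assumptions of Theorem~\ref{thm: Ing37-1-gen}: since $C > 1 \geq \delta$ we may take $\delta$ with $\log^\delta x \ll g(x)$ (indeed any $\delta \leq C$ works, and we need $\delta > 0$), and clearly $\log^C x \ll_\ep x^\ep$; the condition $y = x^{1-1/b}\log^C x \leq x/2$ holds for $x$ large since $b \geq 2$ gives $1 - 1/b < 1$. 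The exponent is $\theta = 1 - \frac{1}{b}$, so we are in the boundary case of part~(3), and we must verify condition~\eqref{eq: Ing-gen-asmpt-3-2}; we must also check condition~(2), namely $0 < b \leq \frac{1}{1-\theta} = b$, which holds with equality, so that is automatic.

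The heart of the matter is verifying \eqref{eq: Ing-gen-asmpt-3-2}, i.e.
\begin{equation*}
    \left( \frac{\log^{\delta/2}x}{g(x)} \right)^{b \eta(x)} \frac{h(x) \log x}{\log \log x} = o(1).
\end{equation*}
With $\eta(x) = \eta_0$, $h(x) = \log^B x$, and $g(x) = \log^C x$, the left-hand side is
\begin{equation*}
    \left( \log^{\delta/2 - C} x \right)^{b \eta_0} \cdot \frac{\log^{B+1} x}{\log\log x} = \log^{(\delta/2 - C) b \eta_0 + B + 1} x \cdot \frac{1}{\log\log x}.
\end{equation*}
This tends to $0$ provided the exponent of $\log x$ is negative, i.e.\ $(C - \delta/2) b \eta_0 > B + 1$. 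Choosing $\delta$ small enough (recall we only need $\delta>0$ and $\log^\delta x \ll \log^C x$, so any $0 < \delta \leq C$ is allowed, and we may shrink $\delta$ freely), this reduces to $C b \eta_0 > B + 1$, i.e.\ $C > \frac{B+1}{b\eta_0}$, which is exactly our hypothesis on $C$; and if the exponent happens to be zero the extra $1/\log\log x$ still forces $o(1)$, so a non-strict choice is harmless but strict inequality is cleanest. We also need $C > 1$ to ensure we are genuinely in the regime where $g$ dominates a logarithm power of the exponent used in the proof of the theorem; this is the other clause in the max.

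The final sentence, the RH specialisation, is then immediate: under RH we may take $\eta_0$ to be any constant in $(0, \tfrac12)$, say $\eta_0 = \tfrac12 - \ep'$ for small $\ep'$, and the zero-density estimate holds with $b = 2$, $B = 0$ (in fact $N(\sigma,T) = 0$ for $\sigma > \tfrac12$ under RH, so any $b \geq 2$, $B \geq 0$ work; to stay within $b \leq \frac{1}{1-\theta}$ with $\theta = \tfrac12$ we take $b = 2$). Then $1 - \frac1b = \frac12$ and the condition on $C$ becomes $C > \max\{1, \frac{0+1}{2\eta_0}\} = \max\{1, \frac{1}{2\eta_0}\}$, which is just below $\max\{1, 1\} = 1$ as $\eta_0 \to \tfrac12$; so any $C > 1$ works, giving the prime number theorem in $(x - \sqrt{x}\log^{C}x, x]$ for every $C > 1$, in particular $C = 2 + \ep$. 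I expect no genuine obstacle here — the only point requiring a moment's care is the bookkeeping of $\delta$ versus $C$ (making sure $\delta$ can be taken small while still satisfying $\log^\delta x \ll g(x)$) and confirming that the boundary case $\theta = 1 - 1/b$ of Theorem~\ref{thm: Ing37-1-gen} is the one in force, together with the degenerate equality $b = \frac{1}{1-\theta}$ in condition~(2).
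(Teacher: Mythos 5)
Your derivation of the main statement from Theorem~\ref{thm: Ing37-1-gen} is correct and is exactly what the paper intends (the paper leaves this implicit with ``as an easy corollary''): take $\eta(T) = \eta_0$, $h(T) = \log^B T$, $g(x) = \log^C x$, $\theta = 1 - 1/b$, land in the boundary case of condition~(3), and shrink $\delta$ so that $(C - \delta/2)b\eta_0 > B+1$. The link you gesture at between the clause $C > 1$ and the extra condition \eqref{eq: Ing-gen-asmpt-3-3} is right, but you should say it plainly: when $b = 2$ and $\theta = 1/2$ the theorem additionally demands $\log^{1+\delta}x \ll g(x)$, forcing $C \geq 1 + \delta > 1$.

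There is, however, a genuine error in your RH specialisation. You claim that under RH one may take $b = 2$ and $B = 0$, on the grounds that $N(\sigma,T)=0$ for $\sigma > 1/2$. But the zero-density hypothesis must hold uniformly on $[\tfrac12,1]$ \emph{including} $\sigma = \tfrac12$, where $N(\tfrac12,T) = \Omega(T\log T)$ by the Riemann--von Mangoldt formula (the paper flags this fact as \eqref{eq: N-sigma-omega}, and even has a footnote to the neighbouring corollary noting that $B < 1$ is impossible). So with $b=2$ you must take $B \geq 1$. Your parenthetical ``any $b\geq 2$, $B\geq 0$ work'' is false at $b = 2$, and your arithmetic then gives $\max\{1, (B+1)/(2\eta_0)\} = \max\{1, 1/(2\eta_0)\} \to 1$, wrongly suggesting every $C > 1$ works. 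With the correct $B = 1$ one gets $(B+1)/(2\eta_0) = 1/\eta_0 \to 2$ as $\eta_0 \to \tfrac12$, so the threshold is $C > 2$; the paper's $y = \sqrt{x}\log^{2+\ep}x$ is exactly this, not a weak specialisation of a $C>1$ bound. Your final line happens to land on the right answer but the route to it overclaims; if taken seriously it would improve Corollary~\ref{cor: under-DH} beyond what the method gives.
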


\begin{rem}
\textnormal{Under RH, \eqref{eq: Cramer} gives a stronger result than Corollary \ref{cor: under-DH} since the approach from Theorem \ref{thm: Ing37-1-gen} is not sensitive to the information about all zeroes of $\zeta$ lying on one vertical line.}
\end{rem}

In addition, we obtain the following statement under DH.
\begin{cor}
Let us assume DH holds, that is
$$N(\sigma, T) \ll T^{2(1-\sigma)} \log^B T,$$
uniformly for $\frac{1}{2} \leq \sigma \leq 1$ and a constant\footnote{The condition $B < 1$ would contradict the bound $N\left(\frac{1}{2}, T\right) = \Omega(T \log T)$ \eqref{eq: N-sigma-omega}.} $B \geq 1$. Then
$$\pi(x) - \pi(x-y) \sim \frac{y}{\log x},$$
for $y = \sqrt{x} \exp(\log^{\alpha} x)$ for all $2/3 < \alpha < 1$.
\end{cor}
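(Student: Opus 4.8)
The plan is to obtain the corollary as a direct application of Theorem~\ref{thm: Ing37-1-gen}. I would take $b = 2$, $\theta = \tfrac12$, $g(x) = \exp(\log^{\alpha} x)$, $h(T) = \log^{B} T$, fix $\delta = 1$ in the hypotheses on $g$, and use for $\eta$ the Korobov--Vinogradov zero-free region \eqref{eq: Korobov-Vinogradov}, i.e.\ $\eta(T) = c(\log T)^{-2/3}(\log\log T)^{-1/3}$ for large $T$, adjusted for small $T$ so that $\eta$ is non-increasing, non-negative and $< \tfrac12$ on all of $T \ge T_0$. Since here $\theta = 1 - \tfrac1b$, the relevant part of hypothesis~(3) is \eqref{eq: Ing-gen-asmpt-3-2}, supplemented by \eqref{eq: Ing-gen-asmpt-3-3} because $b = 2$ and $\theta = \tfrac12$; \eqref{eq: Ing-gen-asmpt-3-1} is vacuous.

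Next I would clear the routine verifications. As $0 < \alpha < 1$ we have $\log g(x) = \log^{\alpha} x = o(\log x)$, so $g(x) \ll_{\ep} x^{\ep}$ for every $\ep > 0$; and $g(x) = \exp(\log^{\alpha}x)$ outgrows every power of $\log x$, so $\log^{\delta}x \ll g(x)$ and also \eqref{eq: Ing-gen-asmpt-3-3} hold with $\delta = 1$. Moreover $y = \sqrt x\,\exp(\log^{\alpha}x) \le x/2$ for $x$ large, $b = 2 \le \tfrac{1}{1-\theta} = 2$, and $h(T) = \log^{B}T$ is non-decreasing. Thus conditions~(1) and~(2) hold, and everything reduces to checking \eqref{eq: Ing-gen-asmpt-3-2}.

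The crux is the estimate of the left-hand side of \eqref{eq: Ing-gen-asmpt-3-2}, which (with $b = 2$, $h(x) = \log^{B}x$) equals
\[
  \left( \frac{\log^{\delta/2}x}{g(x)} \right)^{2\eta(x)} \frac{\log^{B+1}x}{\log\log x}
  = \exp\!\Big( 2\eta(x)\big(\tfrac{\delta}{2}\log\log x - \log^{\alpha}x\big) \Big)\,\frac{\log^{B+1}x}{\log\log x}.
\]
Since $\tfrac{\delta}{2}\log\log x - \log^{\alpha}x \sim -\log^{\alpha}x$ and $\eta(x) = (c+o(1))(\log x)^{-2/3}(\log\log x)^{-1/3}$, the exponential factor is $\exp\!\big( -(2c+o(1))(\log x)^{\alpha-2/3}(\log\log x)^{-1/3}\big)$, whose exponent tends to $-\infty$ precisely because $\alpha > \tfrac23$. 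Taking the logarithm of the whole product, the term $-(2c+o(1))(\log x)^{\alpha-2/3}(\log\log x)^{-1/3}$ dominates $(B+1)\log\log x$, so the product is $o(1)$ and \eqref{eq: Ing-gen-asmpt-3-2} holds. Theorem~\ref{thm: Ing37-1-gen} then gives $\pi(x)-\pi(x-y)\sim y/\log x$, which is the claim.

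I do not expect a genuine obstacle here: once Theorem~\ref{thm: Ing37-1-gen} is available, this is bookkeeping. The only delicate points are massaging the Korobov--Vinogradov region into the monotone shape required by hypothesis~(1), and observing that $\alpha > \tfrac23$ is exactly the threshold: at $\alpha = \tfrac23$ the exponential factor tends to $1$ and the surviving power $\log^{B+1}x$ destroys \eqref{eq: Ing-gen-asmpt-3-2}, while $\alpha \ge 1$ is excluded by the growth constraint $g(x) \ll_{\ep} x^{\ep}$.
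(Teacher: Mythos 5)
Your proposal is correct and follows essentially the same route as the paper: apply Theorem~\ref{thm: Ing37-1-gen} with $b=2$, $\theta=\tfrac12$, $h(T)=\log^{B}T$, the Korobov--Vinogradov zero-free region, and $g(x)=\exp(\log^{\alpha}x)$, and then verify that \eqref{eq: Ing-gen-asmpt-3-2} holds precisely when $\alpha>\tfrac23$. If anything, your direct logarithmic estimate of the left-hand side of \eqref{eq: Ing-gen-asmpt-3-2} is a touch more careful about the $\log\log x$ bookkeeping than the chain of displays in the paper's proof, but the substance and parameter choices are identical.
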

The corollary above is slightly stronger than $y = x^{1/2 + \ep}$, which is usually stated as an implication of DH, see \cite[Chapter 14]{Mont71} for example.
\begin{proof}
    We apply Theorem \ref{thm: Ing37-1-gen} with the following parameters:
    \begin{align*}
        b = 2, \quad \theta = \frac{1}{2}, \quad h(x) = \log^B x,\\
        \eta(x) = \frac{1}{48.08( \log x)^{2/3} (\log \log x)^{1/3}},
    \end{align*}
    where $\eta(x)$ is chosen as in \eqref{eq: Korobov-Vinogradov} with $c = (48.08)^{-1}$ \cite{Belotti2024}. Then \eqref{eq: Ing-gen-asmpt-3-2} from Theorem \ref{thm: Ing37-1-gen} follows from
    $$\left( \log^{B+1} x \right)^{\frac{1}{2\eta(x)}} = o\left( \frac{g(x)}{\log^{\delta/2} x} \right),$$
    which is equivalent to
    $$\left( \log x \right)^{\frac{48.08(B+1)}{2} ( \log x)^{2/3} (\log \log x)^{1/3} + \frac{\delta}{2}} = o(g(x)),$$
    which is true if
    $$\frac{48.08(B+1)}{2} ( \log x)^{2/3} (\log \log x)^{1/3} + \frac{\delta}{2} = o(\log g(x)).$$
    Thus, choosing $\alpha > \frac{2}{3}$ and $g(x) = \exp(\log^{\alpha} x)$ is sufficient for satisfying the assumptions of Theorem \ref{thm: Ing37-1-gen}.
\end{proof}

%This result might be improved further if one applies stronger bounds $A(\sigma)$ will also depend on $T$.

\section{Weighted zero-density estimates} \label{sec: New-Zero-Density}
%\noindent\emph{Keywords:} Lemma 1 from \cite{BakHar96}.
%\subsection{Current classic zero-density estimates} \label{subsec: New-Zero-Density: classic}
In the previous section, we discussed how the bounds of the form 
\begin{equation} \label{eq: classic-zero-dens}
    N(\sigma, T) \ll T^{b(1 - \sigma) + o(1)},
\end{equation}
with $b \geq 2$ a non-negative constant, imply the asymptotic for primes in short intervals $(x - x^{\theta}, x]$ for all $1 - \frac{1}{b} < \theta < 1$ and $x \gg_{\theta, b} 1$ large enough. In this section, we will describe the approach from \cite{HBIw79} by Heath-Brown and Iwaniec used to switch from the bounds of the form \eqref{eq: classic-zero-dens}, which we will refer to as \emph{classic zero-density estimates}, to the \emph{weighted zero-density estimates}, as in \eqref{eq: new-dens-weighted-gen} below. The weighted estimates were used by Heath-Brown and Iwaniec, and in a slightly different form by Baker and Harman \cite{BakHar96}, to show the existence of primes in short intervals. We will cover both forms in Sections \ref{subsec: New-Zero-Density: weighted-HB-Iw} and \ref{subsec: New-Zero-Density: weighted-B-H} respectively. 

As a first step toward weighted zero-density estimates, we should note that the bounds of the form \eqref{eq: classic-zero-dens} are usually proved by deriving the stronger bounds of the form
$$N(\sigma, T) \ll T^{A(\sigma)(1-\sigma) + o(1)},$$
where the maximum of $A(\sigma)$ for $0 \leq \sigma \leq 1$ is equal $b$. Though this observation is not relevant when one uses the methods from \cite{Ing37}, it becomes useful for deriving better weighted zero-density estimates, thus implying better results for primes in short intervals.

\subsection{On weighted zero-density estimates by Heath-Brown and Iwaniec}
\label{subsec: New-Zero-Density: weighted-HB-Iw}

In this section, we present a generalisation for the weighted zero-density estimates \cite[Lemma 6]{HBIw79}.
We will use the following notations:

$$m \sim M \text{ means } M \leq m < 2M,$$
$$\{a_m\}_{m \in \mathbb{N}}, \{b_n\}_{n \in \mathbb{N}}, \quad 0 \le a_m \le 1, \quad 0 \le b_n \le 1,$$
$$M(s) = \sum_{m \sim M} a_m m^{-s}, \quad N(s) = \sum_{n \sim N} b_n n^{-s}.$$

\begin{lem} \label{lem: new-dens-weighted-gen}
Let $\frac{1}{2} \leq \theta < 1$, $\frac{1}{2} < \sigma_0 < 1$, and let $A(\sigma)$, $0 \leq \sigma \leq 1$, be a non-negative function satisfying the following properties:
\begin{enumerate}
    \item $A(\sigma_0) = \frac{1}{2(1-\sigma_0)}$;
    \item $A(\sigma)$ is monotonously non-increasing for $\sigma \geq \sigma_0$.
\end{enumerate}
Let us assume that there exists a constant $B$ such that
\begin{equation} \label{eq: zero-dens-gen}
    N(\sigma, T) \ll T^{A(\sigma)(1-\sigma)} \log^B T,
\end{equation}
uniformly for $\frac{1}{2} \leq \sigma \leq 1$. Denote $T = x^{1 - \theta}$, $M = x^u$, and $N = x^v$ such that the following conditions hold
\begin{align*}
    \frac{2 \sigma_0 - 2 \theta}{2 \sigma_0 - 1} &\leq u + v \leq 1, &(A1) \\ 
    u (2 - 2 \sigma_0) + v (1 - 2 \sigma_0) &\leq \theta + 1 - 2 \sigma_0, &(A2) \\
    v (2 - 2 \sigma_0) + u (1 - 2 \sigma_0) &\leq \theta + 1 - 2 \sigma_0, &(A3)\\
    \theta &\leq \frac{3 - 2\sigma_0}{7 - 6\sigma_0}. &(A4)
\end{align*}
Let $\{a_m\}$, $\{b_n\}$, $M(s)$, and $N(s)$ be defined as above. Then
\begin{equation} \label{eq: new-dens-weighted-gen}
    \sum_{\substack{\rho \\ \beta \geq \sigma, |\gamma| \leq T}} \left| M(\rho) N(\rho) \right| \ll x^{1-\sigma} (\log x)^c,
\end{equation}
for a constant $c$ depending on $B$, uniformly for $0 \leq \sigma \leq 1$. The summation is over all non-trivial zeroes $\rho = \beta + \ie \gamma$ of $\zeta$.
\end{lem}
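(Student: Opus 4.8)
The plan is to follow the Heath--Brown--Iwaniec argument: estimate the zero-detecting sum on the left of \eqref{eq: new-dens-weighted-gen} by combining the classic zero-density bound \eqref{eq: zero-dens-gen} with a mean-value estimate for the Dirichlet polynomial $M(s)N(s)$ on vertical segments. First I would reduce to a single value of $\sigma$: since the summand is a Dirichlet polynomial of length $MN = x^{u+v} \leq x$, the contribution of zeros with $\beta$ very close to $1$ is harmless, and for $\beta$ bounded away from $1$ we can dyadically decompose the range of $\sigma$ and lose only a factor $\log x$. So it suffices to bound $\sum_{\rho:\, \beta \geq \sigma,\, |\gamma| \leq T} |M(\rho)N(\rho)|$ for a fixed $\sigma$ in the relevant range, which by property (1) and (A4) we may take to lie near $\sigma_0$ (for $\sigma \geq \sigma_0$ the density bound is already strong enough, and for $\sigma$ smaller the polynomial bound takes over).

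The core step is the standard ``pick a well-spaced subset of zeros'' device: choose a $1$-spaced (or $(\log x)$-spaced) subset $\mathcal R$ of the zeros with $\beta \geq \sigma$, $|\gamma| \leq T$ of cardinality $\gg N(\sigma,T)/\log T$ — actually of size matching $R := N(\sigma,T)$ up to logs. By H\"older's inequality applied to $|M(\rho)|$ and $|N(\rho)|$ separately, and the mean value theorem for Dirichlet polynomials (the Montgomery--Halász large-values / mean-value estimate, $\sum_{r}|M(\tfrac12+it_r)|^2 \ll (M + \#\{t_r\})\cdot M^{\text{(length correction)}}$ with the $1$-spacing), one bounds $\sum_{\rho \in \mathcal R}|M(\rho)N(\rho)|$ by a product of two such mean values raised to appropriate powers, where the exponents are governed by the splitting of the total polynomial length $u+v$ between the two factors. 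This is exactly where the constraints (A1)--(A3) enter: they are precisely the inequalities ensuring that in each of the two ``regimes'' of the large-values estimate (the $M$-term dominates versus the $R$-term dominates, in the two factors), the resulting exponent of $x$ does not exceed $1 - \sigma$. Condition (A1) controls the overall length $MN$ (lower bound so the polynomial is long enough to beat the trivial count, upper bound so $MN \leq x$), while (A2) and (A3) are the symmetric pair of inequalities coming from applying the large-values bound to $M(s)$ and to $N(s)$ respectively, using $A(\sigma_0) = \frac{1}{2(1-\sigma_0)}$ to convert the density input into the right numerology. Condition (A4) is the self-consistency constraint that keeps $\theta$ (hence $T = x^{1-\theta}$) small enough that the $T$-power coming from $N(\sigma,T) \ll T^{A(\sigma)(1-\sigma)}$ combined with the mean-value terms stays under control at the crossover value $\sigma = \sigma_0$.

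Two technical points I would handle with care. The first is the monotonicity hypothesis (2): it lets me replace $A(\sigma)$ by $A(\sigma_0)$ whenever $\sigma \geq \sigma_0$, so that the worst case in the density bound is genuinely at $\sigma = \sigma_0$; for $\sigma < \sigma_0$ I instead drop the density input entirely and estimate $\sum_\rho |M(\rho)N(\rho)|$ by the fourth-moment / trivial bound $\sum_\rho |M(\rho)N(\rho)| \ll T^{1+o(1)}(MN)^{1-\sigma+o(1)}/\!\log$-factors, which is $\ll x^{1-\sigma}(\log x)^c$ precisely when $T(MN)^{?} \leq x$, again encoded in (A1) and (A4). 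The second is bookkeeping the power of $\log x$: each dyadic decomposition and each application of the mean value theorem with a $(\log x)$-spaced set costs a bounded power of $\log x$, and the $\log^B T$ from \eqref{eq: zero-dens-gen} contributes once, so the final exponent $c$ is an explicit (though unimportant) function of $B$.

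The main obstacle I anticipate is verifying the numerology: one must check, case by case, that the four stated inequalities (A1)--(A4) are exactly the conditions under which every term produced by the large-values machinery — there are essentially four combinations, from the two regimes in each of the two factors — is bounded by $x^{1-\sigma}$ up to logs. Getting the exponents right requires carefully tracking how $A(\sigma_0)(1-\sigma_0) = \tfrac12$ feeds into the Halász-type bound and how $T = x^{1-\theta}$, $M = x^u$, $N = x^v$ recombine; this is routine but error-prone, and is the part where the specific shape of the constraints is forced.
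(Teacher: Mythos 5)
Your overall plan -- bound $\sum|M(\rho)N(\rho)|$ by Cauchy--Schwarz applied to $\sum|M(\rho)|^2$ and $\sum|N(\rho)|^2$, estimate each via the Montgomery mean-value / Hal\'asz large-values theorem with the input $N(\sigma,T)\ll T^{A(\sigma)(1-\sigma)}\log^B T$, and then check case by case that the resulting exponents stay below $1-\sigma$ -- does coincide with the paper's proof, and your reading of the roles of (A1)--(A4) is roughly correct. However there are two concrete issues that would derail the write-up if you carried it out as stated.

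First, the opening reduction (``dyadically decompose the range of $\sigma$ \dots so it suffices to take $\sigma$ near $\sigma_0$'') is both unnecessary and wrong for this statement. The lemma asserts the bound uniformly for every fixed $\sigma\in[0,1]$; there is nothing to decompose in $\sigma$, and the bound certainly cannot be reduced to $\sigma$ near $\sigma_0$ -- you must verify it for all $\sigma$, and the two cases $\sigma\le\sigma_0$ and $\sigma\ge\sigma_0$ require genuinely different treatments. (A dyadic decomposition in $\beta$ is used in the paper, but only later, in Proposition~\ref{prop: zero_dens_transition}, to pass from the form \eqref{eq: new-dens-weighted-gen} to the $\sum x^{\beta-1}|M(\rho)N(\rho)|$ form; it is not part of the proof of this lemma.)

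Second, and more seriously, your treatment of the regime $\sigma<\sigma_0$ would fail. You propose to ``drop the density input'' and use a ``fourth-moment / trivial bound'' of the shape $\sum_\rho|M(\rho)N(\rho)|\ll T^{1+o(1)}(MN)^{1-\sigma+o(1)}$. With $T=x^{1-\theta}$ and $MN=x^{u+v}$ this gives $x^{1-\theta+(u+v)(1-\sigma)}$, which exceeds $x^{1-\sigma}$ for essentially all $\sigma\le\sigma_0$ in the admissible range; the trivial bound loses too much. What the paper actually does in this case is keep the Cauchy--Schwarz plus second-moment framework and exploit that the Hal\'asz term $E=\min(T,\,T^{1/2+A(\sigma)(1-\sigma)})$ satisfies $E\le T$, so the relevant quantity is $M^{1/2-\sigma}N^{1/2-\sigma}(M+T)^{1/2}(N+T)^{1/2}$, which is considerably smaller than $T\,(MN)^{1-\sigma}$ when $M$ or $N$ is large; subcases on whether $u,v\gtrless 1-\theta$ then recover exactly the conditions in (A1)--(A3). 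Finally, you explicitly defer the verification of (A1)--(A4) to ``routine but error-prone'' numerology; since those verifications are the entire content of the lemma, the proposal as written leaves the crux unproven.
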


\begin{proof}
We will start by following the proof of \cite[Lemma 5]{HBIw79}. By the Cauchy--Schwarz inequality,
\begin{equation} \label{eq: Holder-2}
    \sum |M(\rho) N(\rho)| \leq \left( \sum \left| M(\rho) \right|^2 \right)^\frac{1}{2} \left( \sum \left| N(\rho) \right|^2 \right)^\frac{1}{2},
\end{equation}
where the right-hand side can be bounded by the estimates for the mean values of the Dirichlet polynomials \cite[Theorem 7.3]{Mont71} and the Hal\'asz inequality \cite[Theorem 8.2]{Mont71}
\begin{equation} \label{MeanDirPol}
    \sum_{\substack{\rho \\ \beta \geq \sigma, |\gamma| \leq T}} \left| M(\rho) \right|^2 \ll M^{1 - 2\sigma} \left( M + \min(T, T^{\frac{1}{2}} N(\sigma, T)) \right) \log T.
\end{equation}
We can bound $N(\sigma, T)$ by \eqref{eq: zero-dens-gen}, hence by denoting $E = \min\left( T, T^{\frac{1}{2} + A(\sigma)(1 - \sigma)} \right)$ we replace \eqref{MeanDirPol} by the following stronger condition:
$$\sum_{\substack{\rho \\ \beta \geq \sigma, |\gamma| \leq T}} \left| M(\rho) \right|^2 \ll M^{1 - 2\sigma} \left( M + E \right) \log^{B+1} T,$$
and the same estimates are valid for $\sum_{\rho} \left| N(\rho) \right|^2$.
The statement of the lemma thus follows from
\begin{equation} \label{EstPowersAndLogs}
M^{\frac{1}{2} - \sigma} N^{\frac{1}{2} - \sigma} (M + E)^{\frac{1}{2}} (N + E)^{\frac{1}{2}} \log^{B+1} x \ll x^{1 - \sigma} \log^c x.
\end{equation}
We choose $c \geq B + 1$, so that the upper bound \eqref{EstPowersAndLogs} is implied by
\begin{equation} \label{EstPowers}
    M^{\frac{1}{2} - \sigma} N^{\frac{1}{2} - \sigma} (M + E)^{\frac{1}{2}} (N + E)^{\frac{1}{2}} \ll x^{1 - \sigma}.
\end{equation}
After dividing each side of \eqref{EstPowers} by $(MN)^{1-\sigma}$ we obtain
\begin{equation*}
(1 + E/M)^{\frac{1}{2}} (1 + E/N)^{\frac{1}{2}} \ll \left( \frac{x}{MN}\right)^{1 - \sigma},
\end{equation*}
which follows from the inequality on exponents with base $x$
\begin{equation} \label{EstLogs}
\frac{1}{2}\max\left( 0, \log_x E - u \right) + \frac{1}{2}\max\left( 0, \log_x E - v \right) \leq ( 1 - u - v) (1 - \sigma).
\end{equation}
We consider two cases.\\

\noindent
{\bfseries Case 1.} $0 \leq \sigma \leq \sigma_0$\\
Then $E \leq T = x^{1 - \theta}$, and \eqref{EstLogs} will follow from the inequality
\begin{equation} \label{eq: EstLogs-case1}
\frac{1}{2}\max\left( 0, 1 - \theta - u \right) + \frac{1}{2}\max\left( 0, 1 - \theta - v \right) \leq ( 1 - u - v) (1 - \sigma). 
\end{equation}
If $u, v \geq 1 - \theta$, then \eqref{eq: EstLogs-case1} is equivalent to
$$0 \leq (1 - u - v) (1 - \sigma),$$
which follows from $\sigma \leq 1$ and the upper bound for $u + v$ from $(A1)$. If $u \leq 1 - \theta \leq v$, then \eqref{eq: EstLogs-case1} is
$$\frac{1}{2}(1 - \theta - u) \leq (1 - u - v)(1 - \sigma).$$
The right-hand side is non-increasing in $\sigma$, so we just need to show that the last inequality is true for $\sigma = \sigma_0$. The inequality
$$\frac{1}{2}(1 - \theta - u) \leq (1 - u - v) (1 - \sigma_0)$$
is equivalent to the initial condition $(A3)$.
The argument for $v \leq 1 - \theta \leq u$ is similar with the roles of $u$ and $v$ reversed. If $u, v \leq 1 - \theta$, then \eqref{eq: EstLogs-case1} is the same as
$$1 - \theta - \frac{1}{2}(u + v) \leq (1 - u - v)(1 - \sigma),$$
and follows from $\sigma \leq \sigma_0$ and the lower bound on $u + v$ from $(A1)$.\\

\noindent
{\bfseries Case 2.} $\sigma_0 \leq \sigma \leq 1$\\
The properties of the function $A(\sigma)$ imply that $A(\sigma)(1 - \sigma)$ is non-increasing for $\sigma \geq \sigma_0$. Hence for all $\sigma \geq \sigma_0$, $A(\sigma)(1 - \sigma) \leq A(\sigma_0)(1 - \sigma_0) = \frac{1}{2}$. Thus, $E = T^{\frac{1}{2} + A(\sigma)(1 - \sigma)}$ and the inequality \eqref{EstLogs} becomes
\begin{align}
&\frac{1}{2}\max\left( 0, (1 - \theta) \left( \frac{1}{2} + A(\sigma)(1 - \sigma) \right) - u \right) \nonumber\\
+ &\frac{1}{2}\max\left( 0, (1 - \theta) \left( \frac{1}{2} + A(\sigma)(1 - \sigma) \right) - v \right) \leq ( 1 - u - v) (1 - \sigma). \label{eq: EstLogs-case2}
\end{align}

In case $u, v \geq (1 - \theta) \left( \frac{1}{2} + A(\sigma)(1 - \sigma) \right) - u$, the inequality \eqref{eq: EstLogs-case2} follows from $\sigma \leq 1$ and the upper bound for $u + v$ from $(A1)$.

Before proceeding to the next cases, we note that in the course of the PNT proof, Hadamard and de la Vall\'ee Poussin \cite[Chapter 3]{Tit86} independently proved that $\zeta(s)$ does not vanish on the line $\Re(s) = 1$. Hence, if $\sigma = 1$, then the sum on the left-hand side of \eqref{eq: new-dens-weighted-gen} is empty thus making the inequality hold. Thus, from now on we can assume that $\sigma < 1$.

Let $u \leq (1 - \theta) \left( \frac{1}{2} + A(\sigma)(1 - \sigma) \right) \leq v$, then \eqref{eq: EstLogs-case2} becomes
$$\frac{1}{2} (1 - \theta) \left( \frac{1}{2} + A(\sigma)(1 - \sigma) \right) - \frac{1}{2} u \leq ( 1 - u - v) (1 - \sigma),$$
and can be rearranged as follows
\begin{equation}
    \frac{\frac{1-\theta}{2} - u}{1 - \sigma} + A(\sigma) (1 - \theta) \leq 2 (1 - u - v). \label{eq: case2-u<v}
\end{equation}
From the conditions $(A1)$ and $(A3)$ we have
$$u \geq \frac{2\theta \sigma_0 - 3\theta + 1}{2\sigma_0 - 1},$$
which, combined with $(A4)$, implies $u \geq \frac{1 - \theta}{2}$. In addition, $A(\sigma)$ is non-increasing for $\sigma \geq \sigma_0$, so the function on the left-hand side in \eqref{eq: case2-u<v} is non-increasing. Thus, it is sufficient to check the inequality \eqref{eq: case2-u<v} for $\sigma = \sigma_0$. We will get
$$\frac{1}{2}(1 - \theta - u) \leq (1 - u - v)(1 - \sigma_0),$$
which is equivalent to $(A3)$.

By reversing roles of $u$ and $v$ we prove \eqref{eq: EstLogs-case2} for the case $$v \leq (1 - \theta) \left( \frac{1}{2} + A(\sigma)(1 - \sigma) \right) \leq u.$$ In case $u, v \leq (1 - \theta) \left( \frac{1}{2} + A(\sigma)(1 - \sigma) \right) - u$ we rewrite \eqref{eq: EstLogs-case2} as follows
$$\frac{\frac{1-\theta}{2} - u}{1 - \sigma} + \frac{\frac{1-\theta}{2} - v}{1 - \sigma} + 2 A(\sigma) (1 - \theta) \leq 2 (1 - u - v).$$
Again, the left-hand side of the inequality is non-increasing, so it is enough to substitute $\sigma = \sigma_0$ and get the inequality
$$1 - \theta - \frac{1}{2}(u + v) \leq (1 - u - v)(1 - \sigma_0),$$
which is equivalent to the lower bound on $u + v$ from $(A1)$.
\end{proof}

Now we can easily deduce \cite[Lemma 6]{HBIw79} with $\tau = x^{1 - \theta}$.
\begin{cor}{\cite{HBIw79}} \label{lem: zero-dens-HB-Iw} 
Let $\frac{1}{2} \leq \theta < 1$, and let $M$, $N$, $u$, $v$, $T$, $\{a_m\}$, $\{b_n\}$, $M(s)$, and $N(s)$ be as in Lemma \ref{lem: new-dens-weighted-gen}. Assume the following holds
\begin{align*}
    \frac{14 - 18 \theta}{5} &\leq u + v \leq 1,  &(B1) \\ 
    4 u - 5 v &\leq 9 \theta - 5,  &(B2) \\
    4 v - 5 u &\leq 9 \theta - 5,  &(B3) \\
    \theta &\leq \frac{13}{21}, &(B4)
\end{align*}
Then \eqref{eq: new-dens-weighted-gen} is true for an absolute constant $c$, uniformly for $0 \leq \sigma \leq 1$.
\end{cor}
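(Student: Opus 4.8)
The plan is to specialise Lemma \ref{lem: new-dens-weighted-gen} to the single value $\sigma_0 = \frac{7}{9}$, equipped with a classical zero-density exponent satisfying conditions (1)--(2) of that lemma at this $\sigma_0$, and then to check that the four inequalities $(A1)$--$(A4)$ collapse exactly to $(B1)$--$(B4)$.

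First I would record the arithmetic. With $\sigma_0 = \frac{7}{9}$ one has $2\sigma_0 - 1 = \frac{5}{9}$, $2 - 2\sigma_0 = \frac{4}{9}$, $1 - 2\sigma_0 = -\frac{5}{9}$ and $\theta + 1 - 2\sigma_0 = \theta - \frac{5}{9}$. Substituting these into $(A1)$ gives $\frac{14 - 18\theta}{5} \leq u + v \leq 1$; substituting into $(A2)$ and multiplying through by $9$ gives $4u - 5v \leq 9\theta - 5$; likewise $(A3)$ becomes $4v - 5u \leq 9\theta - 5$; and $(A4)$ becomes $\theta \leq \frac{3 - 2\sigma_0}{7 - 6\sigma_0} = \frac{13/9}{21/9} = \frac{13}{21}$. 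These are exactly $(B1)$, $(B2)$, $(B3)$, $(B4)$, so the hypotheses of the corollary are, term for term, the hypotheses of Lemma \ref{lem: new-dens-weighted-gen} with $\sigma_0 = \frac{7}{9}$.

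Second I would produce an admissible $A(\sigma)$. Take $A(\sigma) = \frac{3}{3\sigma - 1}$ for $\frac{1}{2} \leq \sigma \leq 1$. Then $A\!\left(\frac{7}{9}\right) = \frac{3}{7/3 - 1} = \frac{9}{4} = \frac{1}{2(1 - 7/9)}$, so property (1) holds, and $A(\sigma)$ is non-increasing on $[\frac{7}{9}, 1]$ (indeed on all of $[\frac{1}{2}, 1]$), so property (2) holds; $A$ is also non-negative there. The bound \eqref{eq: zero-dens-gen} with this $A$ is the classical Ingham--Huxley density estimate $N(\sigma, T) \ll T^{3(1-\sigma)/(3\sigma - 1)} \log^{B} T$ on $\frac{3}{4} \leq \sigma \leq 1$, while on $\frac{1}{2} \leq \sigma \leq \frac{3}{4}$ it is implied by any standard density estimate there (the exponent $3(1-\sigma)/(3\sigma-1)$ dominating, e.g., $3(1-\sigma)/(2-\sigma)$ on that range); so \eqref{eq: zero-dens-gen} holds uniformly for $\frac{1}{2} \leq \sigma \leq 1$ with an absolute $B$. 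In fact, inspection of the proof of Lemma \ref{lem: new-dens-weighted-gen} shows that the density estimate is invoked only for $\sigma \geq \sigma_0 = \frac{7}{9}$, so only that range is genuinely needed.

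Applying Lemma \ref{lem: new-dens-weighted-gen} with this $\sigma_0$ and $A$ then yields \eqref{eq: new-dens-weighted-gen} with a constant $c$ depending only on $B$, hence absolute, and the identity $\tau = x^{1-\theta} = T$ is merely the change of notation from \cite[Lemma 6]{HBIw79}. I do not expect a genuine obstacle: the entire content is the substitution $\sigma_0 = \frac{7}{9}$, and the only things to double-check are the clearing of denominators in $(A1)$--$(A4)$ and that the chosen $A$ simultaneously (a) takes the exact value $\frac{9}{4}$ at $\sigma_0$, (b) is non-increasing past $\sigma_0$, and (c) is backed by an actual zero-density theorem with an explicit logarithmic power.
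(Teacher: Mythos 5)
Your proof is correct and follows the same route as the paper: apply Lemma \ref{lem: new-dens-weighted-gen} with $\sigma_0 = \frac{7}{9}$ and verify by substitution that $(A1)$--$(A4)$ collapse to $(B1)$--$(B4)$. The only cosmetic difference is the choice of $A(\sigma)$ below $\sigma = \frac{3}{4}$ (the paper keeps the Heath-Brown--Iwaniec piecewise function $A(\sigma) = \frac{2}{2-\sigma}$ there, while you extend $\frac{3}{3\sigma-1}$ down to $\frac{1}{2}$); since both choices agree on $[\sigma_0, 1]$, which, as you correctly observe, is the only range where the zero-density hypothesis actually enters the argument, this is immaterial.
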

\begin{proof}
We apply Lemma \ref{lem: new-dens-weighted-gen} with $A(\sigma)$ used in \cite{HBIw79}:
$$A(\sigma) = \begin{cases}
			\frac{2}{2 - \sigma}, & 0 \leq \sigma \leq \frac{3}{4}\\
            \frac{3}{3 \sigma - 1}, & \frac{3}{4} \leq \sigma \leq 1.
		 \end{cases}$$
Then $\sigma_0 = \frac{7}{9}$, so conditions $(B1) - (B4)$ are a particular case of $(A1) - (A4)$ from Lemma \ref{lem: new-dens-weighted-gen}. The function $A(\sigma)$ satisfies conditions 1 and 2 from Lemma \ref{lem: new-dens-weighted-gen} by definition.
\end{proof}

\subsection{On weighted zero-density estimates by Baker and Harman.} \label{subsec: New-Zero-Density: weighted-B-H}
In this section, we generalise and improve upon \cite[Lemma 1(i)]{BakHar96}. Although this lemma looks similar to \cite[Lemma 6]{HBIw79}, it provides a slightly stronger statement than \eqref{eq: new-dens-weighted-gen}:
\begin{equation} \label{eq: BH-zero-dens}
    \sum_{\substack{\rho \\ |\gamma| \leq T}} x^{\beta - 1} |M(\rho) N(\rho)| \ll_A (\log x)^{-A},
\end{equation}
for $A > 0$.
The bound \eqref{eq: BH-zero-dens} clearly implies \eqref{eq: new-dens-weighted-gen} while the converse is not true. Nevertheless, we will show \eqref{eq: BH-zero-dens} is true under slightly modified hypotheses of Lemma \ref{lem: new-dens-weighted-gen}. This will follow from Proposition \ref{prop: zero_dens_transition} and Lemma \ref{lem*: new-dens-weighted} below.

\begin{prop} \label{prop: zero_dens_transition}
Let $M(s)$, $N(s)$ be two Dirichlet polynomials with \eqref{eq: new-dens-weighted-gen} being true for an absolute constant $c$, uniformly for $0 \leq \sigma \leq 1$. Then
$$\sum_{\substack{\rho \\ |\gamma| \leq T}} x^{\beta - 1} |M(\rho) N(\rho)| \ll (\log x)^{c+1}.$$
\end{prop}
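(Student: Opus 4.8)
The plan is to pass from the bound \eqref{eq: new-dens-weighted-gen}, which controls $\sum |M(\rho)N(\rho)|$, to the weighted sum $\sum_{|\gamma|\le T} x^{\beta-1}|M(\rho)N(\rho)|$ by the same device Ingham uses in the proof of Theorem \ref{thm: Ing37-1-gen}: writing $x^{\beta-1}$ as an integral over a threshold parameter and swapping the order of summation and integration. Concretely, for a zero $\rho=\beta+\ie\gamma$ with $0\le\beta\le 1$ we have $x^{\beta-1}-x^{-1}=\int_0^\beta x^{\sigma-1}\log x\,d\sigma$, so after summing over $|\gamma|\le T$ and interchanging,
\begin{equation*}
\sum_{\substack{\rho \\ |\gamma|\le T}} x^{\beta-1}|M(\rho)N(\rho)|
= x^{-1}\!\!\sum_{\substack{\rho \\ |\gamma|\le T}}\!|M(\rho)N(\rho)|
+ \log x\int_0^1 x^{\sigma-1}\!\!\!\sum_{\substack{\rho \\ \beta\ge\sigma,\ |\gamma|\le T}}\!\!\!|M(\rho)N(\rho)|\,d\sigma.
\end{equation*}

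Next I would bound both pieces using \eqref{eq: new-dens-weighted-gen}. For the integral, the inner sum is $\ll x^{1-\sigma}(\log x)^c$ uniformly in $\sigma$, so the integrand is $\ll x^{-1}\cdot x^{1-\sigma}\cdot x^{\sigma-1}(\log x)^c = x^{-1}(\log x)^c$... wait — more carefully: $x^{\sigma-1}\cdot x^{1-\sigma}(\log x)^c=(\log x)^c$, so the integral over $\sigma\in[0,1]$ contributes $\ll (\log x)^c$, and the prefactor $\log x$ turns this into $\ll(\log x)^{c+1}$. For the first term, apply \eqref{eq: new-dens-weighted-gen} at $\sigma=0$ to get $\sum_{|\gamma|\le T}|M(\rho)N(\rho)|\ll x(\log x)^c$, so $x^{-1}$ times this is $\ll(\log x)^c$, which is absorbed. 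Summing the two contributions gives the claimed $\ll(\log x)^{c+1}$.

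There is essentially no serious obstacle here; the only points requiring a word of care are that the identity $x^{\beta-1}=x^{-1}+\int_0^\beta x^{\sigma-1}\log x\,d\sigma$ requires $0\le\beta\le 1$, which holds since $M(s)N(s)$ only sees non-trivial zeros of $\zeta$ (all with $0<\beta<1$), and that the interchange of the finite sum over $|\gamma|\le T$ with the integral is trivially justified. One should also note that the hypothesis already furnishes \eqref{eq: new-dens-weighted-gen} \emph{uniformly} for $0\le\sigma\le 1$, which is exactly what makes the integral bound clean — without uniformity down to $\sigma=0$ one would need the supplementary input $N(0,T)\ll T\log T$ together with trivial bounds on $|M(\rho)|,|N(\rho)|$, but that is not needed here. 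So the proof is a two-line computation once the threshold identity is written down.
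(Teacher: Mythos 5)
Your proof is correct, but it takes a genuinely different route from the paper's. You use the continuous integral identity $x^{\beta-1}=x^{-1}+\int_0^\beta x^{\sigma-1}\log x\,d\sigma$ (the same device the paper itself uses at \eqref{eq: Ing37-dens-integral} in the proof of Theorem~\ref{thm: Ing37-1-gen}) and then swap the finite sum with the integral, whereas the paper's own proof of Proposition~\ref{prop: zero_dens_transition} is a discrete version of this idea: it partitions $[0,1]$ into $O(\log x)$ subintervals of length $1/\log x$, bounds each subsum by applying \eqref{eq: new-dens-weighted-gen} at the left endpoint and replacing $x^{\beta-1}$ by $x^{\sigma+1/\log x-1}$ throughout that bin, and multiplies by the number of bins. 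The two arguments are really the same trick packaged continuously versus discretely, and both lose exactly one power of $\log x$ for the same structural reason (one $\log x$ from the length/density of the partition or, equivalently, from the $\log x$ factor in the integral representation). Your version is slightly cleaner and requires no choice of mesh; the paper's is perhaps more self-contained in that it avoids invoking the interchange of sum and integral. Either way the computation is a two-liner, as you observe, and the uniformity of \eqref{eq: new-dens-weighted-gen} down to $\sigma=0$ is indeed the key hypothesis that makes it work.
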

\begin{proof}
Let us group together all the non-trivial zeroes of $\zeta$ into sets $\sigma \leq \beta \leq \sigma + \frac{1}{\log x}$ where $\sigma \in \left\{ 0, \frac{1}{\log x}, \frac{2}{\log x}, \dots, \frac{\lceil \log x \rceil - 1}{\log x} \right\} \subset [0,1]$. The number of such sets does not exceed $\log x + 1$. Then:
\begin{align*}
    & \sum_{\substack{\rho \\ |\gamma| \leq T}} x^{\beta - 1} |M(\rho) N(\rho)| \ll \log x \cdot \sum_{\substack{|\gamma| \leq T \\ \sigma \leq \beta \leq \sigma + \frac{1}{\log x}}} x^{\beta - 1} |M(\rho) N(\rho)| \\
    \ll & \log x \cdot x^{\sigma + \frac{1}{\log x} - 1} x^{1 - \sigma} (\log x)^c = e (\log x)^{c+1},
\end{align*}
where we used \eqref{eq: new-dens-weighted-gen} when switching to the second line.
\end{proof}

The result of Proposition \ref{prop: zero_dens_transition} suggests that switching from $x^{\sigma - 1} \sum_{\substack{\beta \geq \sigma}} |M(\rho) N(\rho)|$ to $\sum x^{\beta - 1} |M(\rho) N(\rho)|$ worsens the upper bound only by a factor $\log x$. Hence, for achieving \eqref{eq: BH-zero-dens} with an absolute constant $A$ we should obtain a result of the form 
\begin{equation} \label{eq: new-dens-weighted_stronger-sigma}
    \sum_{\substack{\rho \\ \beta \geq \sigma, |\gamma| \leq T}} \left| M(\rho) N(\rho) \right| \ll_A x^{1-\sigma} (\log x)^{-A-1}.
\end{equation}
To obtain such a bound, we will have to overcome the following obstacle from Lemma \ref{lem: new-dens-weighted-gen}: when we switch from \eqref{EstPowersAndLogs} to \eqref{EstPowers}, we choose $c \geq B + 1$, and since \eqref{eq: new-dens-weighted_stronger-sigma} implies $c = A+1$, we obtain $-B-2 \geq A$. Thus, for example, if $B > 0$ (which is common to have) then $A < -2$, but we will expect to use \eqref{eq: new-dens-weighted_stronger-sigma} for any $A > 0$. To overcome this problem, Baker and Harman restrict the range for $(u,v)$ to $u + v \leq 1 - \eta$ with $\eta > 0$ some fixed parameter, and implement a small positive parameter $\varepsilon$ into the definition of $T$. The new value of $T = x^{1 - \theta - \ep/2}$ allows to efficiently use the truncated version of Perron's formula from Section \ref{sec: appendix} for the short interval $(x-y,x]$, with $y = x^{\theta + \ep}$. Moreover, as we show in the proof of Lemma \ref{lem*: new-dens-weighted} below, adding $\ep$ also allows us to weaken the condition $N(\sigma, T) \ll T^{A(\sigma)(1-\sigma)} \log^B T$ to $N(\sigma, T) \ll T^{A(\sigma)(1 - \sigma) + o(1)}$.

\begin{lem} \label{lem*: new-dens-weighted}
Let all notations and conditions from Lemma \ref{lem: new-dens-weighted-gen} hold, with \eqref{eq: zero-dens-gen} replaced by the weaker assumption
\begin{equation*}
    N(\sigma, T) \ll T^{A(\sigma)(1 - \sigma) + o(1)},
\end{equation*}
meaning that for all $\delta > 0$, $N(\sigma, T) \ll_{\delta} T^{A(\sigma)(1 - \sigma) + \delta}$ as $T \to \infty$. Let $\eta > 0, ~ 0 < \ep < 1 - \theta$, $T = x^{1-\theta-\ep/2}$, and let us assume that $u + v \leq 1 - \eta$. Then:
\begin{equation} \label{eq: new-dens-weighted_stronger}
    \sum_{\substack{\rho \\ |\gamma| \leq T}} x^{\beta - 1} \left| M(\rho) N(\rho) \right| \ll_{A, \ep, \eta} (\log x)^{-A}, \quad \text{for all } A >0.
\end{equation}
\end{lem}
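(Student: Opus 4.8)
The plan is to mimic the proof of Lemma~\ref{lem: new-dens-weighted-gen}, but to exploit the two new hypotheses --- $T = x^{1-\theta-\ep/2}$ instead of $x^{1-\theta}$, and $u+v\le 1-\eta$ instead of $u+v\le 1$ --- to upgrade the bare inequality~\eqref{EstPowers} into a genuine saving over $x^{1-\sigma}$. Writing $\Sigma(\sigma) = \sum_{\beta\ge\sigma,\,|\gamma|\le T}|M(\rho)N(\rho)|$ and grouping the zeros into horizontal strips $\sigma_k\le\beta<\sigma_k+1/\log x$, $\sigma_k = k/\log x$, exactly as in the proof of Proposition~\ref{prop: zero_dens_transition}, one gets $\sum_{|\gamma|\le T}x^{\beta-1}|M(\rho)N(\rho)|\ll\sum_k x^{\sigma_k-1}\Sigma(\sigma_k)$, a sum with $O(\log x)$ terms. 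It therefore suffices to prove
\[
    \Sigma(\sigma)\ll_{A,\ep,\eta}x^{1-\sigma}(\log x)^{-A-3}\qquad\text{uniformly for }0\le\sigma\le\sigma^{*},
\]
where $\sigma^{*} := 1 - C\,\frac{\log\log x}{\log x}$ with $C = C(A,\eta)$ to be fixed, together with the fact that $\Sigma(\sigma) = 0$ for $\sigma>\sigma^{*}$. The latter is immediate from the Korobov--Vinogradov zero-free region~\eqref{eq: Korobov-Vinogradov}: since $\log T\asymp\log x$, $\zeta$ has no zero $\rho=\beta+\ie\gamma$ with $|\gamma|\le T$ and $\beta>1-c'(\log x)^{-2/3}(\log\log x)^{-1/3}$, and $(\log x)^{-2/3}(\log\log x)^{-1/3}\gg(\log\log x)/\log x$, so for $x$ large this strip contains $[\sigma^{*},1]$. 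It is here that the classical de la Vall\'ee Poussin region is too weak --- it is narrower than $\sigma^{*}$ by a factor $\log\log x$ --- and the stronger, still unconditional, region~\eqref{eq: Korobov-Vinogradov} is genuinely needed.

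For $0\le\sigma\le\sigma^{*}$ I would run the argument of Lemma~\ref{lem: new-dens-weighted-gen} verbatim: Cauchy--Schwarz~\eqref{eq: Holder-2} followed by~\eqref{MeanDirPol}, feeding in the hypothesis in the form $N(\sigma,T)\ll_{\delta}T^{A(\sigma)(1-\sigma)+\delta}$ with a fixed $\delta\in(0,\ep/8]$, produces $\Sigma(\sigma)\ll_{\ep}x^{P(\sigma)}\log x$ with
\[
    P(\sigma) = (u+v)(1-\sigma)+\tfrac12\max\bigl(0,\log_x E'-u\bigr)+\tfrac12\max\bigl(0,\log_x E'-v\bigr),\qquad E' = \min\bigl(T,\,T^{1/2+A(\sigma)(1-\sigma)+\delta}\bigr).
\]
Denoting by $E_{0}$ the corresponding quantity from the proof of Lemma~\ref{lem: new-dens-weighted-gen} (with $T = x^{1-\theta}$ and $\delta = 0$), the drop in $\log T$ by $\ep/2$ together with $\delta\le\ep/8$ and $\tfrac12+A(\sigma)(1-\sigma)\ge\tfrac12$ gives $\log_x E_{0}-\log_x E'\ge\ep/8$. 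Re-running the (sub)cases of that proof with $E'$ in place of $E_{0}$ and tracking the slack then yields
\[
    (1-\sigma)-P(\sigma)\;\gg\;\min\bigl(\ep,\;\eta(1-\sigma)\bigr):
\]
in every subcase in which one of $\log_x E'-u$, $\log_x E'-v$ is positive, the surplus $\ep/8$ produces a saving $\gg\ep$; in the one remaining subcase $u,v\ge\log_x E'$ one has $P(\sigma) = (u+v)(1-\sigma)$, hence $(1-\sigma)-P(\sigma) = (1-u-v)(1-\sigma)\ge\eta(1-\sigma)$, which is where $u+v\le 1-\eta$ is used. Since $1-\sigma\ge C(\log\log x)/\log x$ on $[0,\sigma^{*}]$, choosing $C = C(A,\eta)$ large enough makes $x^{-c\,\eta(1-\sigma)}\le(\log x)^{-A-4}$, while $x^{-c\ep}$ beats every power of $\log x$; in both cases $\Sigma(\sigma)\ll_{A,\ep,\eta}x^{1-\sigma}(\log x)^{-A-3}$. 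Summing over the $O(\log x)$ strips gives~\eqref{eq: new-dens-weighted_stronger}.

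The routine part is the Cauchy--Schwarz step, which differs from Lemma~\ref{lem: new-dens-weighted-gen} only in carrying a $\delta$. I expect the main obstacle to be the displayed inequality $(1-\sigma)-P(\sigma)\gg\min(\ep,\eta(1-\sigma))$: one must notice that the $\ep$-gain alone fails to give any power saving precisely in the subcase $u,v\ge\log_x E'$ (where the left-hand side of the analogue of~\eqref{EstLogs} vanishes identically), so that there the only available saving is $\eta(1-\sigma)$, which degenerates as $\sigma\to 1$ --- forcing both the cutoff at $\sigma^{*}$ and the appeal to~\eqref{eq: Korobov-Vinogradov} for the range $\sigma>\sigma^{*}$.
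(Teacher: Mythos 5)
Your proposal is correct and follows essentially the same path as the paper's proof: reduce to a uniform bound on $\Sigma(\sigma)=\sum_{\beta\ge\sigma,|\gamma|\le T}|M(\rho)N(\rho)|$ via the strip decomposition of Proposition~\ref{prop: zero_dens_transition}, run the Cauchy--Schwarz/mean-value argument of Lemma~\ref{lem: new-dens-weighted-gen} with a small $\delta(\ep)$ absorbed into $E$, extract a saving $\gg\ep$ in the subcases where a $\max(\cdot)$ is positive, and handle the subcase $u,v\ge\log_x E'$ — where only the $\eta(1-\sigma)$ saving is available and it degenerates as $\sigma\to1$ — by invoking the Korobov--Vinogradov zero-free region to truncate $\sigma$ away from $1$. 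The only cosmetic difference is your choice of cutoff $\sigma^{*}=1-C(A,\eta)\log\log x/\log x$ versus the paper's fixed cutoff $1-1/\log^{3/4}T$ (the paper then absorbs the $(A+2)\log\log x/\log x$ term because $\eta/\log^{3/4}x$ dominates it for $x\gg_{A,\eta}1$); both choices work equally well, and your version makes explicit that only a zero-free region slightly wider than $\log\log T/\log T$ is actually needed at this step.
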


\begin{proof}
We will show that the inequality \eqref{eq: new-dens-weighted_stronger-sigma} holds uniformly for all $0 \leq \sigma \leq 1$ and use Proposition \ref{prop: zero_dens_transition}. We follow the proof of Lemma \ref{lem: new-dens-weighted-gen} up to \eqref{MeanDirPol}. Let $\delta := \delta(\ep) > 0$ be small enough and to be defined later, then we have $N(\sigma, T) \ll_{\ep} T^{A(\sigma)(1-\sigma) + \delta}$, and thus, similarly to the proof of Lemma \ref{lem: new-dens-weighted-gen}, we define $E = \min\left\{ T, T^{\frac{1}{2} + A(\sigma)(1-\sigma) + \delta}\right\}$.

To prove \eqref{eq: new-dens-weighted_stronger-sigma}, we will show that
$$M^{\frac{1}{2} - \sigma} N^{\frac{1}{2} - \sigma} (M + E)^{\frac{1}{2}} (N + E)^{\frac{1}{2}} \log x \ll_{A, \ep, \eta} x^{1 - \sigma} \log^{-A-1} x,$$
equivalent to
$$(1 + E/M)^{\frac{1}{2}} (1 + E/N)^{\frac{1}{2}} \ll_{A, \ep, \eta} \left( \frac{x}{MN} \right)^{1 - \sigma} \log^{-A-2} x,$$
which follows from the inequality on exponents below
\begin{equation} \label{eq: LogsUpdated}
\frac{1}{2}\max\left( 0, \log_x E - u \right) + \frac{1}{2}\max\left( 0, \log_x E - v \right) + (A+2)\frac{\log \log x}{\log x} \leq ( 1 - u - v) (1 - \sigma).
\end{equation}
%We have
%\begin{equation*}
%(A + B + 1) \frac{\log \log x}{\log x} \ll_{A, \ep, \eta} \min \left\{ \frac{\eta}{(1 - \theta)^{3/4} \log^{3/4} x}, \frac{\varepsilon}{4}\right\} < \min\left\{ \eta(1 - \sigma), \frac{\varepsilon}{4} \right\}.
%\end{equation*}
%Let $\delta(\sigma, \eta, \varepsilon) := \min\left\{ \eta(1 - \sigma), \frac{\varepsilon}{4} \right\}$, then to prove the lemma, it is sufficient to show that
%\begin{equation} \label{EstLogs-stronger}
%\frac{1}{2}\max\left( 0, \log_x E - u \right) + \frac{1}{2}\max\left( 0, \log_x E - v \right) \leq ( 1 - u - v) (1 - \sigma) - \delta(\sigma, \eta, \varepsilon).
%\end{equation}

We briefly outline what happens in each of the two cases from the proof of Lemma \ref{lem: new-dens-weighted-gen}.\\

\noindent
{\bfseries Case 1.} $0 \leq \sigma \leq \sigma_0$\\
Then $E = T = x^{1 - \theta - \ep/2}$, and \eqref{eq: LogsUpdated} is equivalent to
\begin{equation} \label{eq: EstLogs-stronger-case1}
\frac{1}{2}\max\left( 0, 1 - \theta - \frac{\ep}{2} - u \right) + \frac{1}{2}\max\left( 0, 1 - \theta - \frac{\ep}{2} - v \right) + (A+2)\frac{\log \log x}{\log x} \leq ( 1 - u - v) (1 - \sigma). 
\end{equation}

If $u, v \geq 1 - \theta - \frac{\ep}{2}$, then \eqref{eq: EstLogs-stronger-case1} follows from
$$(A+2)\frac{\log \log x}{\log x} \leq \eta (1 - \sigma_0) \leq ( 1 - u - v) (1 - \sigma),$$
where the first inequality holds for $x \gg_{A, \eta} 1$, and the second inequality follows from $\sigma \leq \sigma_0$ and $u+v \leq 1 - \eta$.

If $u \leq 1 - \theta - \frac{\ep}{2} \leq v$, then \eqref{eq: EstLogs-stronger-case1} can be rewritten as
$$\frac{1}{2}(1 - \theta - u) \leq (1 - u - v)(1 - \sigma) + \frac{\varepsilon}{4} - (A+2)\frac{\log \log x}{\log x},$$
which, for $x \gg_{A, \ep, \eta} 1$, follows from
$$\frac{1}{2}(1 - \theta - u) \leq (1 - u - v)(1 - \sigma).$$
We can complete this case by repeating the argument from Lemma \ref{lem: new-dens-weighted-gen}.

The argument for $v \leq 1 - \theta - \frac{\ep}{2} \leq u$ is similar with the roles of $u$ and $v$ reversed. If $u, v \leq 1 - \theta - \frac{\ep}{2}$, then \eqref{eq: EstLogs-stronger-case1} is 
$$1 - \theta - \frac{1}{2}(u + v) \leq (1 - u - v)(1 - \sigma) + \frac{\ep}{2} - (A+2)\frac{\log \log x}{\log x},$$
and again, with $x \gg_{A, \ep, \eta} 1$, we can proceed as in the argument from Lemma \ref{lem: new-dens-weighted-gen} to complete this case.\\

\noindent
{\bfseries Case 2.} $\sigma_0 \leq \sigma \leq 1$\\
Then $E \leq T^{\frac{1}{2} + A(\sigma)(1 - \sigma) + \delta}$ and the inequality \eqref{eq: LogsUpdated} follows from
\begin{align}
&\frac{1}{2}\max\left( 0, \left(1 - \theta - \frac{\ep}{2}\right) \left( \frac{1}{2} + A(\sigma)(1 - \sigma) + \delta\right) - u \right) \nonumber\\
+ &\frac{1}{2}\max\left( 0, \left(1 - \theta - \frac{\ep}{2}\right) \left( \frac{1}{2} + A(\sigma)(1 - \sigma) + \delta\right) - v \right) \nonumber\\
&\quad\quad\quad\leq ( 1 - u - v) (1 - \sigma) - (A+2)\frac{\log \log x}{\log x}. \label{eq: EstLogs-stronger-case2}
\end{align}

The zero-free regions \eqref{eq: Korobov-Vinogradov} imply that there are no zeroes $\rho$ with $|\gamma| \leq T$ and $\beta \geq 1 - \frac{1}{\log^{3/4} T}$ for $T$ large enough\footnote{We could have used the stronger zero-free regions, but it would not affect the argument.}. Hence, the sum on the left-hand side of \eqref{eq: new-dens-weighted_stronger-sigma} is empty for $\sigma > 1 - \frac{1}{\log^{3/4} T}$ assuming $x \gg_{\ep} 1$, and thus, the inequality holds. Let us assume $\sigma \leq 1 - \frac{1}{\log^{3/4} T} \leq 1 - \frac{1}{(1 - \theta)^{3/4} \log^{3/4} x}$.

Then, in case $u, v \geq \left(1 - \theta - \frac{\ep}{2}\right) \left( \frac{1}{2} + A(\sigma)(1 - \sigma) + \delta\right) - u$, the inequality \eqref{eq: EstLogs-stronger-case2} follows from
$$(A+2)\frac{\log \log x}{\log x} \leq \frac{\eta}{(1 - \theta)^{3/4} \log^{3/4}x} \leq ( 1 - u - v) (1 - \sigma),$$
where the first inequality\footnote{This inequality shows us the limits of the upper bound \eqref{eq: new-dens-weighted_stronger} expressed in terms of zero-free regions. Namely, if we bound $\sum x^{\beta-1}|M(\rho)N(\rho)|$ by $(\text{Er}(x))^{-1}$ and use the strongest possible zero-free regions \eqref{eq: Korobov-Vinogradov}, then $\log \text{Er}(x) \ll \left(\frac{\log x}{\log \log x}\right)^{1/3}$, where we omitted the dependence on $A, \ep, \eta, \theta$ for simplicity.} holds for $x \gg_{A, \eta} 1$, and the second follows from the upper bound on $\sigma$ and $u+v \leq 1 - \eta$.

Let $u \leq \left(1 - \theta - \frac{\ep}{2}\right) \left( \frac{1}{2} + A(\sigma)(1 - \sigma) + \delta \right) \leq v$, then \eqref{eq: EstLogs-stronger-case2} can be rearranged as
\begin{align} \label{eq: updated-case2-u<v>}
    \frac{1}{2} (1 - \theta) \left( \frac{1}{2} + A(\sigma)(1 - \sigma) \right) &- \frac{1}{2} u \leq ( 1 - u - v) (1 - \sigma) \nonumber \\
    &+\frac{\varepsilon}{4} \left( \frac{1}{2} + A(\sigma)(1 - \sigma) + \delta \right) - \frac{\delta}{2}(1-\theta) - (A+2) \frac{\log \log x}{\log x},
\end{align}
where the second line is non-negative for $x \gg_{A} 1$ under the assumption
$$\frac{\delta}{2}(1-\theta) < \frac{\varepsilon}{4} \left( \frac{1}{2} + A(\sigma)(1 - \sigma) + \delta \right),$$
which holds for all $\sigma_0 \leq \sigma \leq 1$ if we define $\delta(\ep) = \frac{\ep}{4(1 - \theta)}$, for example. Thus \eqref{eq: updated-case2-u<v>} follows from $$\frac{1}{2} (1 - \theta) \left( \frac{1}{2} + A(\sigma)(1 - \sigma) \right) - \frac{1}{2} u \leq ( 1 - u - v) (1 - \sigma),$$
which can be proved by repeating the argument from Lemma \ref{lem: new-dens-weighted-gen}.
The proof is identical for the case $v \leq \left(1 - \theta - \frac{\ep}{2}\right) \left( \frac{1}{2} + A(\sigma)(1 - \sigma) + \delta \right) \leq u$.

In case $u, v \leq \left(1 - \theta - \frac{\ep}{2}\right) \left( \frac{1}{2} + A(\sigma)(1 - \sigma) + \delta \right) - u$ we rewrite \eqref{eq: EstLogs-stronger-case2} as below
\begin{align*} 
    \frac{1}{2} (1 - \theta) \left( \frac{1}{2} + A(\sigma)(1 - \sigma) \right) &- \frac{1}{2} (u+v) \leq ( 1 - u - v) (1 - \sigma) \\
    &+\frac{\varepsilon}{2} \left( \frac{1}{2} + A(\sigma)(1 - \sigma) + \delta \right) - \delta (1-\theta) - (A+2) \frac{\log \log x}{\log x},
\end{align*}
which again, for $x \gg_{A} 1$ and $\delta(\ep) = \frac{\ep}{4(1 - \theta)}$, follows from the analogous case from Lemma \ref{lem: new-dens-weighted-gen}.
\end{proof}

%From now on we will assume that the parameter $B$ from \eqref{eq: zero-dens-gen} is bounded by some absolute constant, so it will not appear in the next big $O$-notations used for the error terms. 
%For instance, $B \leq$ \textcolor{blue}{...} in Lemma \ref{lem: new-dens}.

We now cite and prove Lemma 1(i) from \cite{BakHar96} below.
\begin{cor} {\cite[Lemma 1(i)]{BakHar96}} \label{lem: dens-weighted-BakHar}
Let $\eta > 0$, $\varepsilon > 0$, $T = x^{1 - \theta - \ep/2}$, $M = x^u$, and $N = x^v$ be such that the following conditions hold
\begin{align*}
    &\frac{1}{7}(20 - 26\theta) \leq u + v \leq 1 - \eta, \\ 
    &6u - 7v \leq 13 \theta - 7, \\
    &6v - 7u \leq 13\theta - 7, \\
    &\theta \leq \frac{19}{31}.
\end{align*}
Let $\{a_m\}$, $\{b_n\}$, $M(s)$, and $N(s)$ be as at the beginning of Section \ref{subsec: New-Zero-Density: weighted-HB-Iw}. Then
\begin{equation} \label{eq: new-dens-weighted}
    \sum_{\substack{\rho \\ |\gamma| \leq T}} x^{\beta - 1}\left| M(\rho) N(\rho) \right| \ll_{A, \eta, \varepsilon} (\log x)^{-A}, \quad \text{for all } A > 0.
\end{equation} 
\end{cor}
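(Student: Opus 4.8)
The plan is to deduce Corollary \ref{lem: dens-weighted-BakHar} from Lemma \ref{lem*: new-dens-weighted} exactly as Corollary \ref{lem: zero-dens-HB-Iw} was deduced from Lemma \ref{lem: new-dens-weighted-gen}: by choosing a specific admissible function $A(\sigma)$ and checking that the four numerical hypotheses of the corollary are the specialisations of $(A1)$--$(A4)$ for that choice. The natural candidate, matching the exponent pattern $13\theta - 7$ and $\theta \leq 19/31$, is the Baker--Harman density function
\begin{equation*}
    A(\sigma) = \begin{cases} \frac{3}{3-2\sigma}, & 0 \leq \sigma \leq \sigma_0,\\ \frac{3}{5\sigma - 3}, & \sigma_0 \leq \sigma \leq 1,\end{cases}
\end{equation*}
for which the two branches agree precisely at $\sigma_0 = \frac{11}{14}$, and the common value there is $A(\sigma_0)(1-\sigma_0) = \frac{3}{3-2\sigma_0}\cdot(1-\sigma_0) = \frac{1}{2}$, so that property (1) of Lemma \ref{lem: new-dens-weighted-gen} holds. (If the intended function is a different standard one, e.g.\ the one from the zero-density tables used in \cite{BakHar96}, the same bookkeeping applies; the only thing that matters is that its maximum-type point $\sigma_0$ makes $A(\sigma_0)=\frac{1}{2(1-\sigma_0)}$ and that it is non-increasing beyond $\sigma_0$.)

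**First I would** verify the two structural properties: that $A(\sigma)$ is non-negative on $[0,1]$ and non-increasing for $\sigma \geq \sigma_0$ (immediate, since $\frac{3}{5\sigma-3}$ has negative derivative once $5\sigma - 3 > 0$, i.e.\ for $\sigma > 3/5$, and $\sigma_0 = 11/14 > 3/5$), and that it satisfies $A(\sigma_0) = \frac{1}{2(1-\sigma_0)}$. I would also remark that the zero-density estimate $N(\sigma,T) \ll T^{A(\sigma)(1-\sigma)+o(1)}$ for this $A(\sigma)$ is the hypothesis under which \cite{BakHar96} operates, so the weaker (non-explicit) form required by Lemma \ref{lem*: new-dens-weighted} is exactly what is available.

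**Then I would** substitute $\sigma_0 = \frac{11}{14}$ into $(A1)$--$(A4)$ and simplify. We have $2\sigma_0 - 1 = \frac{8}{14} = \frac{4}{7}$, $2 - 2\sigma_0 = \frac{6}{14} = \frac{3}{7}$, $1 - 2\sigma_0 = -\frac{8}{14} = -\frac{4}{7}$, $2\sigma_0 - 2\theta = \frac{11}{7} - 2\theta$, $3 - 2\sigma_0 = \frac{31}{14}$, and $7 - 6\sigma_0 = 7 - \frac{66}{14} = \frac{32}{14}$. Condition $(A1)$ becomes $\frac{(11/7 - 2\theta)}{4/7} \leq u+v \leq 1$, i.e.\ $\frac{11 - 14\theta}{4} \leq u+v$; one checks that together with the constraint $u+v \leq 1-\eta$ imposed in Lemma \ref{lem*: new-dens-weighted}, this is consistent with (indeed slightly sharper-looking than) the stated $\frac{1}{7}(20-26\theta) \leq u+v$ in the regime $\theta \leq 19/31$ — here I would verify the arithmetic identity/implication carefully. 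Condition $(A2)$, $u\cdot\frac{3}{7} - v\cdot\frac{4}{7} \leq \theta + 1 - \frac{22}{14} = \theta - \frac{4}{7}$, multiplies up to $3u - 4v \leq 7\theta - 4$; scaling to match the stated $6u - 7v \leq 13\theta - 7$ requires reconciling the two linear forms, which I expect works out once the precise $A(\sigma)$ is pinned down (the discrepancy suggests $\sigma_0$ may actually be a value giving $2-2\sigma_0 : 2\sigma_0 - 1 = 6:7$ adjusted, namely $\sigma_0$ with the right denominators — this is the point to get exactly right). Finally $(A4)$ reads $\theta \leq \frac{31/14}{32/14} = \frac{31}{32}$, which is weaker than $\theta \leq \frac{19}{31}$; since Lemma \ref{lem: new-dens-weighted-gen}/\ref{lem*: new-dens-weighted} only needs $(A4)$ to hold, the stronger hypothesis $\theta \leq 19/31$ of the corollary certainly implies it. With all of $(A1)$--$(A4)$ verified, Lemma \ref{lem*: new-dens-weighted} applies verbatim (with the same $\eta$, $\ep$, $T = x^{1-\theta-\ep/2}$, $M = x^u$, $N = x^v$) and yields exactly \eqref{eq: new-dens-weighted}.

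**The main obstacle** I anticipate is purely bookkeeping: identifying the correct $A(\sigma)$ and $\sigma_0$ so that the four numerical inequalities of the corollary are \emph{literally} the specialisations of $(A1)$--$(A4)$, rather than merely implied by them up to a harmless constant. The coefficients $6,7,13$ and the bound $19/31$ must be reproduced on the nose, which forces a specific value of $\sigma_0$ (and hence a specific two-branch $A(\sigma)$); once that value is found, checking monotonicity, the boundary condition $A(\sigma_0)(1-\sigma_0) = \frac{1}{2}$, and the four substitutions is routine algebra. No analytic difficulty remains, since all the analysis — Cauchy--Schwarz, the Halász/mean-value input \eqref{MeanDirPol}, the case analysis on $(u,v)$, and the $\ep$-room argument weakening the $\log^B$ to $T^{o(1)}$ — is already carried out in the proof of Lemma \ref{lem*: new-dens-weighted}.
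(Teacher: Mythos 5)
Your strategy is exactly the paper's, but the specific value of $\sigma_0$ you chose is wrong, and you never resolve the mismatch you yourself flag. The paper applies Lemma~\ref{lem*: new-dens-weighted} with $\sigma_0 = \frac{10}{13}$ (arising from the zero-density estimates of \cite[Theorem 11.5]{Ivic1985}, which are the ones used in \cite{BakHar96}), not $\sigma_0 = \frac{11}{14}$. Your own hint near the end points the way: the coefficients $6u - 7v$ force $2-2\sigma_0 : 2\sigma_0 - 1 = 6:7$, whose unique solution is $\sigma_0 = \frac{10}{13}$, and had you followed through you would have landed on the right value. With $\sigma_0 = \frac{10}{13}$ the four conditions $(A1)$--$(A4)$ reduce \emph{exactly} to the stated ones: one has $2\sigma_0 - 1 = \frac{7}{13}$, $2 - 2\sigma_0 = \frac{6}{13}$, $1 - 2\sigma_0 = -\frac{7}{13}$, so
\begin{equation*}
    (A1)\colon \ \frac{2\sigma_0 - 2\theta}{2\sigma_0 - 1} = \frac{20 - 26\theta}{7}, \qquad (A2)\colon \ \frac{6u - 7v}{13} \leq \theta - \frac{7}{13} \ \Longleftrightarrow\ 6u - 7v \leq 13\theta - 7,
\end{equation*}
$(A3)$ by symmetry, and $(A4)\colon \ \theta \leq \frac{3 - 2\sigma_0}{7 - 6\sigma_0} = \frac{19/13}{31/13} = \frac{19}{31}$. (The upper bound $u+v \leq 1-\eta$ comes from the extra hypothesis of Lemma~\ref{lem*: new-dens-weighted}.) In contrast, with your $\sigma_0 = \frac{11}{14}$, $(A1)$ yields $\frac{11 - 14\theta}{4}$ and $(A2)$ yields $3u - 4v \leq 7\theta - 4$, neither of which equals the target; moreover your computation of $(A4)$ has an arithmetic slip ($3 - \frac{22}{14} = \frac{20}{14}$, not $\frac{31}{14}$, giving $\theta \leq \frac{5}{8}$, still not $\frac{19}{31}$), and your claimed identity $\frac{3}{3-2\sigma_0}(1-\sigma_0) = \frac{1}{2}$ fails at $\sigma_0 = \frac{11}{14}$ (it equals $\frac{9}{20}$). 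So the gap is concrete: the corollary's hypotheses are \emph{not} specialisations of $(A1)$--$(A4)$ at $\sigma_0 = \frac{11}{14}$, and the deduction does not close. Replace $\sigma_0$ by $\frac{10}{13}$ (with the corresponding $A(\sigma)$ from \cite[Theorem 11.5]{Ivic1985}, for which $A\bigl(\tfrac{10}{13}\bigr) = \tfrac{13}{6} = \tfrac{1}{2(1-\sigma_0)}$ and which is non-increasing past $\sigma_0$), and the rest of your argument is correct and is precisely what the paper does.
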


%\begin{rem}
%\textnormal{$T = x^{1 - \theta - \frac{\ep}{2}/2}$ corresponds to $y = x^{\theta + \varepsilon}$ and $T = (x/y) x^{\varepsilon/2}$, see \cite[Section 3]{BakHar96} and Section \ref{sec: appendix} below.}
%\end{rem}
    
\begin{proof}
The proof follows from Lemma \ref{lem*: new-dens-weighted} with $\sigma_0 = \frac{10}{13}$. Such a value of $\sigma_0$ is obtained by applying the zero-density estimates \cite[Theorem 11.5]{Ivic1985}, which were used in the work \cite{BakHar96} of Baker and Harman.
\end{proof}

Using Lemma \ref{lem*: new-dens-weighted} and the zero-density estimates \cite[Theorem 11.4]{Ivic1985}, we provide a slight improvement of Corollary \ref{lem: dens-weighted-BakHar}.

\begin{cor} \label{cor: new-dens-weighted}
Let $\eta > 0$, $\varepsilon > 0$, $T = x^{1 - \theta - \ep/2}$, $M = x^u$, and $N = x^v$ be such that the following conditions hold
\begin{align*}
    &\frac{1}{9}(26 - 34\theta) \leq u + v \leq 1 - \eta, \\
    &8u - 9v \leq 17 \theta - 9, \\
    &8v - 9u \leq 17\theta - 9,\\
    &\theta \leq \frac{25}{41}.
\end{align*}
Let $\{a_m\}$, $\{b_n\}$, $M(s)$, and $N(s)$ be as in the beginning of Section \ref{subsec: New-Zero-Density: weighted-HB-Iw}. Then \eqref{eq: new-dens-weighted} holds.
\end{cor}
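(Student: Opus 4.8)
The plan is to derive this corollary from Lemma \ref{lem*: new-dens-weighted} in exactly the way Corollary \ref{lem: dens-weighted-BakHar} is, but now with the single choice $\sigma_0 = \tfrac{13}{17}$ and with the zero-density input \cite[Theorem 11.4]{Ivic1985} in place of \cite[Theorem 11.5]{Ivic1985}. So the first step is to take $A(\sigma)$ to be the exponent function supplied by \cite[Theorem 11.4]{Ivic1985} on the range where that theorem applies, patching it if necessary on the rest of $[\tfrac12,1]$ by any weaker admissible bound (as is done for $A(\sigma)$ in the proof of Corollary \ref{lem: zero-dens-HB-Iw}), so that $N(\sigma,T)\ll T^{A(\sigma)(1-\sigma)+o(1)}$ holds uniformly for $\tfrac12\le\sigma\le1$, and then to check the two structural hypotheses of Lemma \ref{lem: new-dens-weighted-gen}: that $A$ is non-increasing for $\sigma\ge\sigma_0$, and that $A(\tfrac{13}{17})=\tfrac{1}{2(1-13/17)}=\tfrac{17}{8}$, i.e.\ that $\sigma_0=\tfrac{13}{17}$ is precisely the abscissa at which the exponent $A(\sigma)(1-\sigma)$ of Theorem 11.4 reaches $\tfrac12$.

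Granting that, the rest is bookkeeping: substitute $\sigma_0=\tfrac{13}{17}$ into conditions $(A1)$--$(A4)$ of Lemma \ref{lem*: new-dens-weighted}. Using $2\sigma_0-1=\tfrac{9}{17}$, $2-2\sigma_0=\tfrac{8}{17}$, $1-2\sigma_0=-\tfrac{9}{17}$ and $\theta+1-2\sigma_0=\theta-\tfrac{9}{17}$, condition $(A1)$ turns into $\tfrac19(26-34\theta)\le u+v\le 1-\eta$; conditions $(A2)$ and $(A3)$ turn into $8u-9v\le 17\theta-9$ and $8v-9u\le 17\theta-9$ after clearing the common denominator $17$; and $(A4)$ turns into $\theta\le\tfrac{3-2\sigma_0}{7-6\sigma_0}=\tfrac{25/17}{41/17}=\tfrac{25}{41}$. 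These are exactly the four hypotheses displayed in the corollary, and the conclusion \eqref{eq: new-dens-weighted} is literally the conclusion \eqref{eq: new-dens-weighted_stronger} of Lemma \ref{lem*: new-dens-weighted}, with $\varepsilon$ (inside $T=x^{1-\theta-\varepsilon/2}$) and the slack $\eta$ playing the same roles; so nothing further is needed.

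The one step with real content --- and the one I would be most careful about --- is confirming that the $A(\sigma)$ coming from \cite[Theorem 11.4]{Ivic1985} genuinely satisfies hypotheses (1) and (2) of Lemma \ref{lem: new-dens-weighted-gen}: that Theorem 11.4 is sharper than Theorem 11.5 on the relevant stretch of the critical strip, and that its exponent crosses $\tfrac12$ at $\sigma=\tfrac{13}{17}$ rather than at $\tfrac{10}{13}$. It is precisely this sharpening that lowers $\sigma_0$ from $\tfrac{10}{13}$ to $\tfrac{13}{17}$ and hence enlarges the admissible region for $(u,v)$ --- note, for instance, that $\tfrac19(26-34\theta)<\tfrac17(20-26\theta)$ for every $\theta>\tfrac12$ --- at the small cost of the slightly smaller cap $\theta\le\tfrac{25}{41}<\tfrac{19}{31}$. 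As in Lemma \ref{lem*: new-dens-weighted}, I would also use that only the non-explicit form $N(\sigma,T)\ll T^{A(\sigma)(1-\sigma)+o(1)}$ is required, so there is no need to keep track of the power of $\log T$ in \cite[Theorem 11.4]{Ivic1985}.
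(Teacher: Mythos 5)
Your proof is correct and takes essentially the same route as the paper, which simply says to apply Lemma~\ref{lem*: new-dens-weighted} with $\sigma_0=\tfrac{13}{17}$ corresponding to $A(\sigma)$ from \cite[Theorem 11.4]{Ivic1985}; your substitution arithmetic producing $(A1)$--$(A4)$ from $\sigma_0=\tfrac{13}{17}$ is accurate, and you rightly flag that the only substantive check is that the exponent of Theorem~11.4 crosses $\tfrac12$ at $\sigma=\tfrac{13}{17}$ and is non-increasing beyond it.
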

\begin{proof}
We apply Lemma \ref{lem*: new-dens-weighted} with $\sigma_0 = \frac{13}{17}$ corresponding to $A(\sigma)$ from \cite[Theorem 11.4]{Ivic1985}.
\end{proof}

We note that the recent result by Guth and Maynard \cite{GuthMaynard2024}, when applied to Lemma \ref{lem*: new-dens-weighted}, gives a larger value of $\sigma_0$ and thus more narrow regions for $(u,v)$. The reason is that \cite{GuthMaynard2024} is much better on the left of $\frac{3}{4}$, whereas the result in \cite[Theorem 11.4]{Ivic1985} is better to the right of $\frac{3}{4}$, where the intersection of $A(\sigma)$ and $\frac{1}{2 - 2\sigma}$ is taken. It seems interesting to explore how to generalise Lemma \ref{lem*: new-dens-weighted} to make use of the result by Guth and Maynard. To produce such a generalisation, one idea would be to look into variations of Lemma \ref{lem: new-dens-weighted-gen} provided in \cite{HBIw79}. Namely, using the same argument but replacing the Cauchy--Schwarz inequality by the H\"{o}lder inequality with various weights in the bound for $\sum|M(\rho)N(\rho)|$, we can get other regions for $u, v$ where \eqref{eq: new-dens-weighted} is true. In \cite{HBIw79}, the authors use two more choices for weights, so that \eqref{eq: Holder-2} is replaced by either
$$\sum_{|\gamma| \leq T} |M(\rho) N(\rho)| \leq N(\sigma, T)^{1/4}\left( \sum \left| M(\rho) \right|^2 \right)^\frac{1}{2} \left( \sum \left| N(\rho) \right|^4 \right)^\frac{1}{4},$$
or
$$\sum |M(\rho) N(\rho)| \leq N(\sigma, T)^{1/3}\left( \sum \left| M(\rho) \right|^2 \right)^\frac{1}{2} \left( \sum \left| N(\rho) \right|^6 \right)^\frac{1}{6}.$$

In these two cases, the authors of \cite{BakHar96} used the same zero-density results as in \cite{HBIw79} --- even if it was possible to get a slight improvement, the authors \cite{BakHar96} noted it would not lead to a major improvement in the final result. It is also harder to provide generalisations similar to Lemmas and \ref{lem: new-dens-weighted-gen} and \ref{lem*: new-dens-weighted}: we see the function $N(\sigma, T)$ on the right-hand side of both H\"{o}lder's inequalities above. Because of this, we will need to impose more conditions on the function $A(\sigma)$ to derive the good regions for $(u, v)$. In particular, the regions from \cite[Lemma 1(ii)-(iii)]{BakHar96} do not depend on the value of $\sigma_0$ anymore but on the maximum and some more precise monotonicity properties of $A(\sigma)$. 

Nevertheless, considering the recent major improvement of the zero-density estimates \cite{GuthMaynard2024}, a good optimisation question to answer would be:
\begin{question} \label{question: Holder-for-regions}
\textnormal{
    What weights in the H\"{o}lder inequality are the most efficient and can make the use of the best available zero-density estimates? What weights will provide a non-negligible contribution to the regions $(u, v)$ where \eqref{eq: new-dens-weighted} holds?
}
\end{question}

\section{Appendix} \label{sec: appendix}

In this section, we explain how to use the weighted zero-density estimates from Section \ref{sec: New-Zero-Density} to estimate the sums
$$\sum_{\substack{m \sim M, n \sim N \\ x-y < mnr \leq x}} a_m b_n \Lambda(r),$$
where $y = x^{\theta + \ep}$ for some $0 < \theta < 1$, $0 < \ep < 1 - \theta$, and $x \gg_\theta 1$ is large enough.

\begin{prop}
Let all notations and conditions from Lemma \ref{lem*: new-dens-weighted} hold, then 
\begin{equation*}
    \sum_{\substack{m \sim M \\ n \sim N \\ x-y < mnr \leq x}} a_m b_n \Lambda(r) = y \sum_{\substack{m \sim M \\ n \sim N}}\frac{a_m b_n}{mn} + O_{A, \eta, \ep}(y \log^{-A}x).
\end{equation*}
In particular \eqref{eq: PNT-weighted} holds if for some $C > 0$,
$$\sum_{\substack{m \sim M \\ n \sim N}}\frac{a_m b_n}{mn} \gg \log^{-C}x.$$
\end{prop}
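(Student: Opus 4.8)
The plan is to reduce the weighted sum over primes $r$ (weighted by $\Lambda(r)$) to a contour integral via Perron's formula, just as in the classical prime-counting arguments, and then control the resulting sum over zeros of $\zeta$ by the weighted zero-density estimate \eqref{eq: new-dens-weighted} of Lemma \ref{lem*: new-dens-weighted}. Concretely, writing $F(s) = M(s)N(s)$ and $G(s) = \sum_r \Lambda(r) r^{-s}$ restricted to the relevant range, the sum $\sum_{x - y < mnr \le x} a_m b_n \Lambda(r)$ is picked out by $\frac{1}{2\pi i}\int F(s)\bigl(-\frac{\zeta'}{\zeta}(s)\bigr)\frac{x^s - (x-y)^s}{s}\,ds$ over a vertical line, plus a truncation error from cutting the integral at height $T = x^{1-\theta-\ep/2}$.

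First I would set up the truncated Perron formula for $\sum_{m\sim M, n\sim N, mnr \le x} a_m b_n \Lambda(r)$ and subtract the same with $x$ replaced by $x-y$; since $y = x^{\theta+\ep}$ and $T = x^{1-\theta-\ep/2}$, the product $yT/x = x^{\ep/2}$ is a small power of $x$, so after moving the contour to $\Re s = 1$ and bounding trivially one gets a truncation error that is a genuine power saving, comfortably $O_{A,\eta,\ep}(y\log^{-A}x)$ (this is exactly the point the paper highlights when it says the value $T = x^{1-\theta-\ep/2}$ lets one ``efficiently use the truncated version of Perron's formula''). Next I would shift the contour to the left, picking up the main term $y\sum_{m\sim M,n\sim N}\frac{a_mb_n}{mn}$ from the pole of $-\zeta'/\zeta$ at $s=1$ (the factor $\frac{x^s-(x-y)^s}{s}$ has residue $y$ there, up to lower order), and expressing the remaining integral as a sum over non-trivial zeros $\rho = \beta + i\gamma$ with $|\gamma|\le T$ of terms bounded by $|M(\rho)N(\rho)|\cdot\bigl|\frac{x^\rho - (x-y)^\rho}{\rho}\bigr|$.

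Then I would invoke the bound $\bigl|\frac{x^\rho-(x-y)^\rho}{\rho}\bigr| \le 2yx^{\beta-1}$ from \eqref{eq: mean-value-x-rho}, so the zero-sum is $\ll y\sum_{|\gamma|\le T} x^{\beta-1}|M(\rho)N(\rho)|$, and this is precisely $\ll_{A,\eta,\ep} y\log^{-A}x$ by Lemma \ref{lem*: new-dens-weighted}, whose hypotheses are in force by assumption. Summing the contributions (there are only a bounded, $\theta$-dependent number of dyadic pieces and secondary terms) yields the claimed asymptotic. The final ``in particular'' clause is immediate: if $\sum_{m\sim M,n\sim N}\frac{a_mb_n}{mn}\gg\log^{-C}x$ for some $C$, choose $A = C+1$ in the error term so that $O(y\log^{-A}x) = o\bigl(y\sum\frac{a_mb_n}{mn}\bigr)$, giving \eqref{eq: PNT-weighted}.

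The main obstacle is the bookkeeping at the contour shift: making sure the horizontal segments at height $\pm T$ and the left vertical segment contribute only admissible errors, and handling the fact that $-\zeta'/\zeta$ is not a polynomial so one cannot literally truncate its Dirichlet series — the standard fix is to work with a smoothed or explicitly-truncated form, or equivalently to start from $\psi$-type partial sums and apply the explicit formula, which is the route taken in \cite{HBIw79} and which I would follow here, citing the relevant Perron-type lemma (e.g.\ \cite[Lemma 3.12]{Tit86} or Wolke's refinement as used in the proof of Theorem \ref{thm: Ing37-1-gen}). Everything else is routine once the zero-sum has been matched to the shape handled by Lemma \ref{lem*: new-dens-weighted}.
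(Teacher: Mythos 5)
Your proposal follows essentially the same route as the paper: apply the truncated Perron formula (the paper cites \cite[Lemma 3.12]{Tit86}) with $T = x^{1-\theta-\ep/2}$, bound each zero's contribution via \eqref{eq: mean-value-x-rho}, and then invoke Lemma \ref{lem*: new-dens-weighted} to control the resulting sum $\sum x^{\beta-1}|M(\rho)N(\rho)|$, while the truncation error is a power saving thanks to the factor $x^{\ep/2}$ in $T$ relative to $x/y$. The paper's proof is just this sketch written out compactly, so you have reconstructed the intended argument correctly.
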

\begin{proof}
    Let $M(s) =\sum_{m \sim M} a_m$, $N(s) = \sum_{n \sim N} b_n$, where $0 \leq a_m, b_n \leq 1$. By the truncated version of Perron's formula \cite[Lemma 3.12]{Tit86}, for all $T \geq 1$,
\begin{equation*}
    \sum_{\substack{m \sim M, n \sim N \\ x-y < mnr \leq x}} a_m b_n \Lambda(r) = y \sum_{\substack{m \sim M \\ n \sim N}}\frac{a_m b_n}{mn} - \sum_{\substack{\rho \\ 0 \leq \beta \leq 1 \\ |\gamma| < T}} \frac{(x+y)^{\rho} - x^{\rho}}{\rho} M(\rho)N(\rho) + O\left( \frac{x \log^2 x}{T} \right).
\end{equation*}
By taking $T = (y/x)x^{\ep} = x^{1-\theta - \ep/2}$ and using \eqref{eq: mean-value-x-rho}, we get
\begin{align*}
    \sum_{\substack{m \sim M, n \sim N \\ x-y < mnr \leq x}} a_m b_n \Lambda(r) - y \sum_{\substack{m \sim M, n \sim N \\ x-y < mnr \leq x}}\frac{a_m b_n}{mn} &\leq y \sum_{\substack{\rho \\ 0 \leq \beta \leq 1 \\ |\gamma| < T}} x^{\beta - 1} |M(\rho)N(\rho)| + O\left(y ~\frac{\log^2 x}{x^\ep}\right),\\
    & = O_{A, \eta, \ep}(y \log^{-A}x),
\end{align*}
where we used Lemma \ref{lem*: new-dens-weighted} to switch to the last line.
\end{proof}

\section*{Acknowledgements}

I would like to thank Timothy Trudgian and Bryce Kerr for their support during my work on this paper. I would also like to thank Roger Heath-Brown and Glyn Harman for their help and explanations via the emails and personal chats during my visit to the UK, and James Maynard for hosting me and having mathematical chats with me at the University of Oxford during this visit. Finally, I would like to thank the reviewers of my PhD thesis for their valuable comments, which resulted in the improved version of this paper.

\bibliographystyle{acm}
\bibliography{refs.bib}

\end{document}